\documentclass[11pt]{amsart}
\usepackage{amsmath,amssymb,amsthm,mathrsfs}
\usepackage[colorlinks=true,citecolor=blue,linkcolor=blue,urlcolor=blue]{hyperref}
\usepackage[left=0.82in,right=0.82in,top=0.55in,bottom=0.62in]{geometry}

\newtheorem{theorem}{Theorem}[section]
\newtheorem{proposition}[theorem]{Proposition}
\newtheorem{corollary}[theorem]{Corollary}

\theoremstyle{remark}

\newcommand{\Z}{\mathbb Z}
\newcommand{\Q}{\mathbb Q}
\newcommand{\Pone}{\mathbb P^1}
\newcommand{\Mod}{\operatorname{Mod}}

\title[Cartwright--Steger fibration]{A monodromy relation for the Cartwright--Steger fibration}
\author{Anar Akhmedov}
\address{School of Mathematics, University of Minnesota, Minneapolis, MN 55455, USA}
\author{Sai-Kee Yeung}
\address{Department of Mathematics, Purdue University, West Lafayette, IN 47907, USA}
\date{September 2026}
\subjclass[2020]{57K20, 14D06, 14H30, 57R17}
\keywords{Cartwright--Steger surface, Lefschetz fibration, vanishing cycle, cyclic cover, mapping class group}

\begin{document}
\begin{abstract}
We study the genus-$19$ Albanese fibration of the Cartwright--Steger surface and its order-three symmetry. Passing to the orbifold quotient of the elliptic base gives a relation among the two handle monodromies and the three Dehn twists about the vanishing cycles. The two real vanishing paths give cycles whose real fixed points lie on different ovals of the invariant fiber. For a cyclic triple cover, we express the intersection of a curve with its image under the deck transformation as a signed crossing count on the quotient. We determine the two local branch values of a degree-$72$ bicanonical pencil near a node and study a symmetric bicanonical pencil. A general pencil also gives a finite degree-$72$ map from a blowup of the surface to the product of the elliptic curve and a projective line. The monodromy translates of the nodal vanishing classes span the first homology of a smooth fiber.
\end{abstract}
\maketitle

\section{Introduction}

Let $X$ be the Cartwright--Steger surface and $\alpha:X\to E$ its Albanese map. The smooth fibers have genus $19$. The surface has
\[
e(X)=3,\qquad \operatorname{sign}(X)=1,\qquad K_X^2=9=3c_2(X).
\]
Koziarz and Yeung proved that the Albanese fibration is stable \cite{KY}. Independently, Rito showed that the three fibers fixed by the order-three automorphism $\sigma$ are smooth, which also implies stability in view of \cite{CKY,Rito}. All singular points therefore occur in orbits of three. For a nodal fibration over a torus, $e(X)$ counts the nodes, so $e(X)=3$ gives one such orbit: three nodes in three distinct fibers. In this three-node case, the singular fibers are irreducible by \cite[long version, Proposition~5(iii)]{CKY}.
Cartwright, Koziarz, and Yeung described the Albanese fibration \cite{CKY}; Borisov and Yeung gave equations for $X$ \cite{BY}.

Monodromy describes how a smooth fiber changes as it is transported around a loop in the base. Together with the vanishing cycles at the singular fibers, it records how the surface is assembled from its fibers. For a Lefschetz fibration over a disk, each vanishing cycle specifies a $2$-handle attached to the product of a smooth fiber with a disk, with framing one less than the fiber framing. For a base of positive genus, the handle monodromies also describe the gluing around the generators of the base. Thus a marked monodromy description gives concrete information for a handlebody description of the total space. It can be used to compute the fundamental group and study embedded surfaces needed for symplectic sums and surgery constructions. Such constructions have produced exotic smooth structures, that is, smooth four-manifolds which are homeomorphic but not diffeomorphic to the standard examples, with small Euler characteristic and second Betti number. The first author's earlier work gives examples of this kind \cite{AkhSmall,APSmall,APOdd}. Akhmedov, Sakall\i{}, and Yeung subsequently used the Cartwright--Steger surface and other complex ball quotients to construct symplectic four-manifolds with nonnegative signature \cite{ASY}.

The small characteristic numbers of $X$ make its Albanese fibration particularly interesting in this context. Our recent work on the real Albanese map \cite{AYreal} and on positive spheres and circle surgery in the real quotient \cite{AYquotient} raises related questions about embedded surfaces and smooth structures. A marked monodromy description could provide further information about the surfaces available for these constructions and the relations imposed on their fundamental groups. The monodromy description developed here provides a tool for constructing exotic four-manifolds with small characteristic numbers. We will pursue these applications in forthcoming work.

Our aim is to make the monodromy problem for this Albanese fibration more explicit. A marked description requires both the vanishing cycles on a reference fiber and the monodromies around the handle generators of the punctured elliptic base. We study these data using the real invariant fiber $F$, its genus-six cyclic quotient $G$, and the order-three quotient of the base. Theorem~\ref{thm:crossing} gives a signed-crossing formula for the intersection of a cycle with its deck translate. Proposition~\ref{prop:schreier} and Theorem~\ref{thm:marked} give the monodromy relation and the identities arising from the order-three symmetry; Proposition~\ref{prop:local-branch-pair} identifies the local branch pair at the real node. Proposition~\ref{prop:symmetric-pencil} and Theorem~\ref{thm:product-cover} give a symmetric pencil and a finite product cover that may be used for global continuation. Theorem~\ref{thm:vanishing-span} shows that all monodromy translates of the vanishing classes span the homology of a smooth fiber. Proposition~\ref{prop:albanese-kernel-normal-generation} gives normal generators for the kernel of the Albanese homomorphism. These results supply a starting point for determining the marked monodromy. They do not yet determine the isotopy class of the transported vanishing cycle or the handle monodromies. We are not aware of a comparably explicit marked monodromy description for the Albanese fibration of a complex ball quotient surface.

The two-oval calculation for the real invariant fiber is in \cite{AYreal}. The unique real critical point and its saddle structure are described in \cite{SS}. Szab\'o computed the Albanese map and its critical points explicitly \cite{Szabo}; we use the critical value to specify a real path and the local Hessian at the node.

\section{The invariant fiber and its quotient}

Identify $E$ with $\mathbb C/(\mathbb Z+\omega\mathbb Z)$, where $\omega=e^{2\pi i/3}$, as in \cite{CKY,AYreal}. Let $O=0$. We choose the generator $\sigma$ so that its action on $E$ is
\[
\sigma_E(z)=\omega^2z,\qquad \alpha\sigma=\sigma_E\alpha.
\]
The real structure fixes $O$, and $O$ is the only point of $E$ fixed by both the real involution and $\sigma_E$ \cite[Lemma~7]{AYreal}. Put $F=\alpha^{-1}(O)$ and $S=\sigma|_F$. The fiber $F$ is smooth of genus $19$ \cite{KY,Rito}. In the Borisov--Yeung bicanonical coordinates $[U_0:\cdots:U_9]$, the three fixed points of $S$ are the real coordinate points
\[
 p_7=[U_7],\qquad p_8=[U_8],\qquad p_9=[U_9].
\]
The quotient map $\pi:F\to G=F/\langle S\rangle$ has degree three and is totally ramified at these three points. Riemann--Hurwitz gives
\[
2\cdot19-2=3(2g(G)-2)+3(3-1),\qquad g(G)=6.
\]
The real structure $C$ on $F$ satisfies $CSC=S^{-1}$. The restriction $\pi:F(\mathbb R)\to G(\mathbb R)$ is a homeomorphism, and $F(\mathbb R)$ has two ovals \cite{AYreal}.

The same work establishes that
\[
L_0=U_7+U_8+U_9,\qquad L_1=U_7+2U_8+3U_9
\]
have no common zero on $F$. Their ratio defines a degree-$72$ map $\ell:F\to\Pone$. Cartwright, Koziarz, and Yeung first computed $K_X\cdot F=36$ and obtained $g(F)=19$ by adjunction \cite[long version, Theorem~3]{CKY}. Thus, for the bicanonical class $H=2K_X$, we have $H\cdot F=72$. The base-point-freeness of this particular pair on $F$ and the resulting degree computation are proved in \cite[Section~5, Proposition~10]{AYreal}, using the bicanonical model of \cite{BY}. The map is $S$-invariant and descends to a degree-$24$ map $\bar\ell:G\to\Pone$ \cite[Proposition~10]{AYreal}. Its ramification divisor has degree $58$.

\begin{proposition}\label{prop:three-labels}
In the affine coordinate $L_1/L_0$ on the target of $\bar\ell$, the three branch points of $\pi$ map to $1$, $2$, and $3$, respectively.
\end{proposition}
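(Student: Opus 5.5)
The plan is a direct evaluation of $L_1/L_0$ at the three branch points. The branch points of $\pi$ on $G$ are the images $\pi(p_7),\pi(p_8),\pi(p_9)$ of the three totally ramified points. Since $\ell=\bar\ell\circ\pi$, we have $\bar\ell(\pi(p_j))=\ell(p_j)$. It therefore suffices to compute $\ell$ at the coordinate points $p_7,p_8,p_9$ of $F$ in the Borisov--Yeung bicanonical coordinates.

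First, I would check that these evaluations make sense. The point $p_j=[U_j]$ is the coordinate point with $U_j=1$ and all other $U_i=0$. So $L_0(p_j)=1$ for $j=7,8,9$. In particular $L_0$ does not vanish at any $p_j$, which is consistent with the base-point-freeness of the pair $(L_0,L_1)$ on $F$ from \cite[Proposition~10]{AYreal}. Hence the affine coordinate $L_1/L_0$ is finite at each $p_j$.

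Next, I would read off $L_1$ at the same points: $L_1(p_7)=1$, $L_1(p_8)=2$, $L_1(p_9)=3$. This gives
\[
\ell(p_7)=1,\qquad \ell(p_8)=2,\qquad \ell(p_9)=3,
\]
and so $\bar\ell$ sends $\pi(p_7),\pi(p_8),\pi(p_9)$ to $1,2,3$ respectively. The three values are distinct, so the three branch points are separated by $\bar\ell$. This separation is the point of the proposition, since it lets later arguments label the branch points by these values.

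The only step needing care is the identification of the fixed points of $S$ with the coordinate points $p_7,p_8,p_9$. One must also confirm that these points lie on $F$ itself and not merely on the other $\sigma$-invariant fibers. Both facts are stated at the start of this section, citing the Borisov--Yeung coordinates and \cite{AYreal}, so I will take them as given. Beyond that, the argument is routine evaluation of linear forms at coordinate vectors.
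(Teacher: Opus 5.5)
Your proposal is correct and matches the paper's proof. You evaluate $(L_0,L_1)$ at the coordinate points $p_7,p_8,p_9$, obtaining $(1,1)$, $(1,2)$, $(1,3)$, and then use $\ell=\bar\ell\circ\pi$ to transfer these values to the branch points $\pi(p_j)$ on $G$.
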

\begin{proof}
At $p_7$, $p_8$ and $p_9$, respectively, the pairs $(L_0,L_1)$ are $(1,1)$, $(1,2)$ and $(1,3)$ in the corresponding homogeneous coordinate normalizations. Since $\ell=\bar\ell\pi$, the assertion follows.
\end{proof}

Both sections transform by the same character of $\sigma$, so the rational function $\ell=L_1/L_0$ satisfies $\ell\sigma=\ell$ on $X$. The automorphism permutes general fibers of $\alpha$; its restriction $S$ acts on the invariant fiber $F$.

\begin{proposition}\label{prop:global-pencil}
The base locus of the pencil spanned by $L_0,L_1$ is a zero-dimensional scheme of length $36$. On every smooth fiber disjoint from the base locus, $\ell=L_1/L_0$ defines a morphism of degree $72$.
\end{proposition}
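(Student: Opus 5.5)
The plan is to compute the base locus as the intersection of two divisors in the bicanonical class, and then read off the degree on a fiber from the intersection number $H\cdot F=72$ recorded above.

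\emph{Finiteness of the base locus.} Let $D_0=\{L_0=0\}$ and $D_1=\{L_1=0\}$. Both are members of $|2K_X|$, pulled back from hyperplane sections of the Borisov--Yeung bicanonical model. By \cite[Proposition~10]{AYreal}, $L_0$ and $L_1$ have no common zero on $F$. Hence $D_0\cap D_1$ cannot contain a curve that meets $F$. Every curve on $X$ either meets $F$, or lies in a fiber of $\alpha$ because it is contracted by $\alpha$. A horizontal curve surjects onto $E$ and therefore meets $F$, so any common component of $D_0$ and $D_1$ would be a fiber component. A full fiber $F_t$ is numerically equivalent to $F$, and $D_0\cdot F=72>0$, so $D_0$ cannot contain a fiber as a component unless that fiber meets $D_1$ in a curve. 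The fibers are irreducible: smooth fibers obviously are, and the three nodal fibers are irreducible by \cite[Proposition~5(iii)]{CKY}. So a common component would have to be an entire fiber $F_t$ with $t\neq O$. I would rule this out with the $\sigma$-action. Both sections transform by the same character, so $D_0$ and $D_1$ are $\sigma$-stable. A common fiber component $F_t$ would then force $F_{\sigma_E t}$ and $F_{\sigma_E^2 t}$ to be common components as well. In that case $D_0-(F_t+F_{\sigma_E t}+F_{\sigma_E^2 t})$ would be effective, with class $2K_X-3F$.

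The step I expect to be the main obstacle is excluding this residual class. The first thing to try is showing that $2K_X-3F$ is not effective. One has $(2K_X-3F)\cdot F=72$, which gives no obstruction, and $(2K_X-3F)\cdot K_X=72-108<0$. Since $K_X$ is ample (ball quotient), an effective divisor has positive intersection with $K_X$. This gives a contradiction, so $D_0\cap D_1$ is zero-dimensional.

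\emph{Length.} For a zero-dimensional intersection of two effective divisors, the length of the scheme $D_0\cap D_1$ equals $D_0\cdot D_1=(2K_X)^2=4K_X^2=36$, using $K_X^2=9$.

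\emph{Degree on fibers.} Let $F_t$ be a smooth fiber disjoint from the base locus. The restrictions $L_0|_{F_t}$ and $L_1|_{F_t}$ are sections of $\mathcal O_X(2K_X)|_{F_t}$ without common zeros. Their ratio is therefore a morphism $F_t\to\Pone$ defined by a base-point-free pencil in this line bundle. Its degree is $\deg(2K_X|_{F_t})=2K_X\cdot F=72$, since $F_t$ is algebraically equivalent to $F$. The morphism is nonconstant because $L_0|_{F_t}$ and $L_1|_{F_t}$ are not proportional; if they were, the base-point-free pencil would have a common zero of degree $72>0$. This completes the plan.
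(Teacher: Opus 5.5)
Your proof is essentially the paper's. You use $B\cap F=\emptyset$ and irreducibility of the fibers to reduce a common component to a full fiber $F_t$, rule that out by ampleness, and then read off the length $(2K_X)^2=36$ and the degree $2K_X\cdot F=72$.

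The paper handles the ampleness step with a single fiber: $H\cdot(H-F_t)=36-72<0$ for $H=2K_X$. Your own test works the same way, since $K_X\cdot(2K_X-F)=18-36<0$. So the step you flagged as the ``main obstacle'' is not actually an obstacle, and the $\sigma$-orbit argument can be dropped.

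As written, that $\sigma$-orbit step also has a small hole. If $t$ is one of the two $\sigma_E$-fixed points other than $O$, then $F_t$, $F_{\sigma_E t}$ and $F_{\sigma_E^2 t}$ are the same fiber. In that case you only get $D_0-F_t$ effective, not a divisor of class $2K_X-3F$. The single-fiber inequality covers this case too.

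Finally, $2K_X\cdot K_X=18$, not $72$; you used $2K_X\cdot F$ by mistake. This does not affect the sign, since $(2K_X-3F)\cdot K_X=-90$.
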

\begin{proof}
Let $B=\{L_0=L_1=0\}\cap X$. Proposition~\ref{prop:three-labels} and the base-point-freeness quoted above show that $B$ is disjoint from $F$. If $B$ contained a curve $D$, then $D$ would be disjoint from $F$, hence $D\cdot F=0$ and $D$ would be vertical for $\alpha$. All smooth fibers are irreducible, and the three singular fibers are irreducible and reduced \cite[long version, Proposition~5(iii)]{CKY}; hence the support of $D$ would be a full fiber $F_t$. A hyperplane divisor containing $F_t$ would have the form $H=F_t+R$ with $R$ effective. Since $H$ is ample, this would give
\[
36=H^2=H\cdot F_t+H\cdot R\geq 72,
\]
a contradiction. Thus the two hyperplane divisors meet properly. Their intersection scheme has length $H^2=36$. On a fiber avoiding $B$, their restrictions have no common zero and define a map of degree $H\cdot F_t=72$.
\end{proof}

\section{A real vanishing path}

Let $b\in E(\mathbb R)$ be the critical value of the real Albanese map. There is exactly one such value, and the corresponding critical point is a real saddle \cite{SS}. Choose one of the two real arcs from $O$ to $b$ whose interior avoids $b$, and denote it by $\gamma$. Since the other two critical values are the $\sigma_E$-translates of $b$, the paths $\gamma$, $\sigma_E\gamma$, and $\sigma_E^2\gamma$ connect the three nodes to the common smooth reference fiber $F$. Let $c\subset F$ be the vanishing cycle of $\gamma$. With equivariant transport, the other two cycles are $S(c)$ and $S^2(c)$.

The arc may be specified in the Weierstrass coordinate of \cite{Szabo}, where $E$ is $y^2=x^3-3888$ and $O$ is the point at infinity. Put
\[
 y_* = \frac{487135434066}{1459^3}.
\]
For the remainder of the paper we take $\gamma$ to be the positive-$x$ real arc
\[
 \gamma(y)=\bigl((y^2+3888)^{1/3},y\bigr),\qquad y:+\infty\longrightarrow y_*.
\]
It begins at $O$ and ends at the real critical value computed in \cite{Szabo}; its interior contains no critical value. This choice fixes which of the two real paths is used for $c$ and for the connecting path of the orbifold meridian $d$.

The path $\gamma$ is preserved by the real involution. Real holomorphic Morse coordinates at the node give a transverse real crossing, locally of the form $xy=\delta$ in real coordinates, and a small real smoothing has a conjugation-invariant vanishing cycle with two real fixed points. Transport along $\gamma$ may be chosen to commute with conjugation. Thus $C(c)=c$ as an unoriented curve, while $C$ reverses its orientation. In particular,
\[
C_*[c]=-[c]\in H_1(F;\mathbb Z).
\]
This local description does not determine $[c]$ in a basis of $H_1(F)$.

There are two choices of real arc from $O$ to $b$. Their real vanishing cycles have different positions relative to the two ovals of $F$.

\begin{proposition}\label{prop:two-real-arcs}
Let $c_+$ and $c_-$ be transported along the two real arcs from $O$ to $b$, using conjugation-equivariant transport. The two real fixed points of each $c_\pm$ lie on one oval of $F(\mathbb R)$, and the ovals obtained for $c_+$ and $c_-$ are different.
\end{proposition}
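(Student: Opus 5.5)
Since $\gamma$ and its companion arc are preserved by the real involution, the plan is to reduce the statement to the one-parameter family of real curves $F_t(\mathbb R)$, $t\in E(\mathbb R)$, and to Morse theory for the real Albanese map $\alpha_{\mathbb R}:X(\mathbb R)\to E(\mathbb R)$. The real locus $E(\mathbb R)$ is a circle, and by \cite{SS} it contains exactly one critical value $b$ of $\alpha_{\mathbb R}$, where $\alpha_{\mathbb R}$ has a saddle. Removing $O$ and $b$ leaves two open arcs $\gamma_+$ and $\gamma_-$, one of which is the arc $\gamma$ fixed in the coordinate of \cite{Szabo}. Over the closed circle minus $b$ the real fibers vary by isotopy, since there are no other real critical values. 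In particular, along each $\gamma_\pm$ the two ovals of $F(\mathbb R)$ \cite{AYreal} are carried to the real locus of the fiber near $b$ on the corresponding side.

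The first step is local. In real Morse coordinates at the node, $\alpha_{\mathbb R}$ has the form $xy=\delta$ with $\delta$ real. For $\delta>0$ and for $\delta<0$ the real fiber near the node consists of two branches of a hyperbola. The complex vanishing cycle is the conjugation-invariant circle $|x|=|y|=|\delta|^{1/2}$, and its real points are $x=y=\pm\delta^{1/2}$ when $\delta>0$, or $x=-y=\pm(-\delta)^{1/2}$ when $\delta<0$. On each side, then, the two real fixed points of $c_\pm$ lie on the two local hyperbola branches that the saddle joins when the level crosses $0$. Passing the saddle in one direction joins the branches $\{x,y>0\}$ and $\{x,y<0\}$, and passing in the other direction joins the complementary pair.

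The second step is global. Because $b$ is the only real critical value, going once around $E(\mathbb R)$ through $b$ performs a single saddle surgery, which must take the two-oval real fiber back to a two-oval real fiber, namely $F$ itself. A saddle that merges two ovals drops the count to one, and a saddle that splits an oval raises it to three, unless it is compensated. The only way the count can be preserved is that, near $b$, the saddle joins two arcs of a single oval on each side, so that it is a self-surgery of one oval. I need to identify which oval this is on each side. For $\delta$ on the $\gamma_+$ side, both local branches hit by the real points of $c_+$ belong to one oval $A_+$, which is transported to $F$ along $\gamma_+$; the same holds for $A_-$ on the $\gamma_-$ side. This gives the first claim of the proposition.

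The main obstacle is proving $A_+\neq A_-$. My approach is to note that the self-surgery on $A_+$ produces the real fiber on the other side. If the surgery stayed within the same oval on both sides, the two ovals would be unchanged except for a half-twist of one of them. This is compatible with oval counting, so counting alone does not decide the question. Instead I would use the topology of $X(\mathbb R)$, via \cite{AYreal}: one oval sweeps out a torus component, and the other sweeps out a surface that passes through the saddle. Then I would compare the two monodromies around the real circle, or use the $S$-quotient homeomorphism $F(\mathbb R)\cong G(\mathbb R)$ together with the branch labels $1,2,3$ of Proposition~\ref{prop:three-labels} to tell the ovals apart. If this is not decisive, the fallback is to follow each oval numerically along $\gamma$ from the explicit point $y_*$ in Szab\'o's coordinate, tracking the real points of $\ell$.
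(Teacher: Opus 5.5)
Your local Morse computation and the oval-counting argument are sound, and they give the first claim. Since $b$ is the only real critical value, the real fibers on both sides of $b$ have two ovals. So the saddle neither merges nor splits ovals, and the two real points of the local vanishing circle lie on a single oval on each side. Equivariant transport carries that oval to an oval $A_\pm$ of $F(\mathbb R)$. This is the paper's saddle component $\mathbb{RP}^2\setminus(D^2\sqcup D^2)$, which you derive rather than quote.

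The gap is in $A_+\neq A_-$, which you leave open. The input you propose for it is also misremembered. Suppose one oval swept out a torus component of $X(\mathbb R)$, as you say. Then that oval would be carried to itself around the whole real circle without meeting the saddle. The saddle circle would then return to the same oval on both sides. So your recalled picture would force $A_+=A_-$ and contradict the statement.

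The missing idea is the connectedness of $X(\mathbb R)$ from \cite[Theorem~1]{AYreal}. Cut $X(\mathbb R)$ along $F(\mathbb R)$. The result is the saddle cobordism $W_s$, with one boundary circle on each side, together with a product annulus $W_a$. By your first step, when the cut is reglued at $O$, the boundary circle of $W_s$ on the $\gamma_\pm$ side is identified with $A_\pm$. If $A_+=A_-$, regluing closes $W_s$ up on itself and $W_a$ up on itself. This gives two components: a non-orientable surface of Euler characteristic $-1$, and a torus or Klein bottle. Connectedness therefore forces the boundary-gluing permutation to be the transposition, hence $A_+\neq A_-$.

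None of your fallback options is needed: comparing return maps, using the labels $1,2,3$ of Proposition~\ref{prop:three-labels}, or numerical tracking. As stated, none of them is shown to distinguish the ovals. The branch labels in particular concern the fixed points $p_7,p_8,p_9$ and have no evident bearing on which oval the saddle circle meets.
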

\begin{proof}
Cut the real base circle at $O$. The connectedness of $X(\mathbb R)$ and the decomposition of the cut real fibration into a saddle component and a product annulus follow from \cite[Theorem~1 and Corollary~2]{AYreal}; the unique real saddle is also described in \cite{SS}. Each regular real fiber has two circles. The saddle component is diffeomorphic to $\mathbb{RP}^2\setminus(D^2\sqcup D^2)$. The saddle component has one boundary circle on either side. In the real Morse model, the two fixed points of the local vanishing circle lie on the regular level circle of this saddle component; the two local arcs join outside the Morse chart to form that circle. Equivariant transport along either arc carries the fixed points to the corresponding boundary circle of the cobordism. The boundary-gluing permutation is the transposition for two components by \cite[Theorem~1]{AYreal}: otherwise the reglued real surface would be disconnected. Hence the boundary circles of the saddle component on the two sides are identified at $O$ with different ovals of $F(\mathbb R)$.
\end{proof}

The proposition specifies an oval only after a path has been chosen. It does not determine the isotopy class of either $c_\pm$ on $F$ or the parity of its intersection with $S(c_\pm)$.

\section{Intersections on a cyclic triple cover}

We first work with a cyclic triple cover of closed oriented surfaces. For Riemann surfaces, intersections are taken with the complex orientation.

\begin{theorem}\label{thm:crossing}
Let $\pi:F\to G$ be a cyclic degree-three branched cover with deck generator $S$ and branch set $D\subset G$. Let $c\subset F$ be an oriented embedded circle disjoint from $\pi^{-1}(D)$. After a small perturbation, assume $\beta=\pi|_c$ is an immersion with transverse double points and no triple points. At a double point $\beta(t)=\beta(u)$, $t<u$, let
\[
c(u)=S^{j(t,u)}c(t),\qquad j(t,u)\in\{1,2\},
\]
and let $\varepsilon(t,u)$ be the local intersection sign of the ordered tangent vectors $(\dot\beta(t),\dot\beta(u))$ on $G$. Then
\begin{equation}\label{eq:crossing}
[c]\cdot S_*[c]
=\sum_{\substack{t<u\\\beta(t)=\beta(u)}}
\begin{cases}
-\varepsilon(t,u),&j(t,u)=1,\\
\phantom{-}\varepsilon(t,u),&j(t,u)=2.
\end{cases}
\end{equation}
Here the loop $\beta|_{[t,u]}$ is based at the double point. Our monodromy convention lifts it from $c(t)$ to $S^{j(t,u)}c(t)$.
\end{theorem}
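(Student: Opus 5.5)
The plan is to compute $[c]\cdot S_*[c]$ directly as a signed count of the geometric intersection points of $c$ with $S(c)$, and to match these points bijectively with the double points of $\beta$. First, the perturbation making $\beta$ a generic immersion can be realized by a small isotopy of $c$ in $F\setminus\pi^{-1}(D)$. Indeed, $\pi$ is a local diffeomorphism there, so perturbing $\beta$ in $G$ lifts to perturbing $c$. This does not change $[c]$ or $S_*[c]$. Since $\pi$ is holomorphic and unbranched off $\pi^{-1}(D)$, it is an orientation-preserving local diffeomorphism there, and the local intersection signs on $F$ and on $G$ agree under $\pi_*$.

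Next I would set up the bijection. Let $x\in c\cap S(c)$, so that $x=S(y)$ with $y\in c$. Then $\pi(x)=\pi(y)$. Moreover $x\neq y$, because $S$ acts freely on $F\setminus\pi^{-1}(D)$ and $c$ avoids $\pi^{-1}(D)$. Hence $\pi(x)$ is a double point of $\beta$. Conversely, take a double point $\beta(t)=\beta(u)$ with $t<u$. Since there are no triple points, $c(t)$ and $c(u)$ are the only points of $c$ over this point of $G$. They lie in one free $S$-orbit, so $c(u)=S^{j}c(t)$ with $j\in\{1,2\}$.

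\begin{itemize}
\item If $j=1$, then $x=c(u)=S(c(t))$ lies in $c\cap S(c)$.
\item If $j=2$, then $c(t)=S(c(u))$, so $x=c(t)$ lies in $c\cap S(c)$.
\end{itemize}

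In each case exactly one of the two points lies in $c\cap S(c)$, since $S$ has order three and $j\neq 0$. So each double point contributes exactly one intersection point, and every intersection point arises this way.

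The remaining step is the sign computation, which I expect to be the only delicate point; it is a matter of orientation bookkeeping. The local sign at $x$ is that of the ordered pair (tangent to $c$ at $x$, tangent to $S(c)$ at $x$). The curve $S(c)$ carries the pushed-forward orientation, so its tangent at $S(y)$ is $S_*\dot c$ at $y$. We also have $\pi\circ S=\pi$, hence $\pi_*S_*\dot c(s)=\dot\beta(s)$.

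\begin{itemize}
\item For $j=1$: $x=c(u)$, and the pair maps under $\pi_*$ to $(\dot\beta(u),\dot\beta(t))$. Its sign is $-\varepsilon(t,u)$.
\item For $j=2$: $x=c(t)$, and the pair maps to $(\dot\beta(t),\dot\beta(u))$. Its sign is $\varepsilon(t,u)$.
\end{itemize}

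Transversality of $\beta$ at the double point gives transversality of $c$ and $S(c)$ at $x$, so the homological intersection number is the sum of these local signs. This yields \eqref{eq:crossing}.

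Finally, I would remark that $j(t,u)$ agrees with the monodromy convention stated in the theorem. The lift of $\beta|_{[t,u]}$ starting at $c(t)$ is $c|_{[t,u]}$ itself, which ends at $c(u)=S^{j(t,u)}c(t)$.
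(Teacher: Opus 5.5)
Your proposal is correct and follows essentially the same route as the paper: you match the intersection points of $c$ and $S(c)$ bijectively with the double points of $\beta$, since exactly one of the two lifts lies on $S(c)$ when $j\in\{1,2\}$, and you read off the signs $-\varepsilon$ for $j=1$ and $+\varepsilon$ for $j=2$ by projecting the ordered tangent pair. Your added remarks, that the perturbation is a small isotopy of $c$ and that $\pi$ is an orientation-preserving local diffeomorphism away from $\pi^{-1}(D)$, state explicitly what the paper's proof uses without saying.
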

\begin{proof}
At a double point the two lifts are distinct because $c$ is embedded; hence $j$ is $1$ or $2$. If $j=1$, $c$ meets $S(c)$ at $c(u)=S(c(t))$. The ordered tangents of $(c,S(c))$ project to $(\dot\beta(u),\dot\beta(t))$, so the intersection sign is $-\varepsilon$. If $j=2$, then $c(t)=S(c(u))$ and the ordered projected tangents are $(\dot\beta(t),\dot\beta(u))$, giving $+\varepsilon$. The two occupied sheets form a pair in a three-element cyclic set. This pair and its translate have one element in common, so the double point accounts for exactly one intersection of $c$ with $S(c)$. Conversely, every intersection of $c$ and $S(c)$ projects to a double point of $\beta$. Summing the local signs proves \eqref{eq:crossing}.

Interchanging $t$ and $u$ reverses the local crossing sign and replaces $j$ by $-j$ modulo three. The summand is unchanged, as required.
\end{proof}

\begin{corollary}\label{cor:real-parity}
Suppose $F$ and $G$ have compatible real involutions $C$ and $J$, with $CSC=S^{-1}$, and $c$ is $C$-invariant as an unoriented circle. Choose a compatible generic perturbation in Theorem~\ref{thm:crossing}. Then
\[
[c]\cdot S_*[c]\equiv
\#\{\text{$J$-fixed double points of $\beta$}\}\pmod2.
\]
\end{corollary}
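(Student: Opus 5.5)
The plan is to apply Theorem~\ref{thm:crossing} and reduce the signed sum modulo $2$. Modulo two, each summand $\pm\varepsilon(t,u)$ contributes $1$, so
\[
[c]\cdot S_*[c]\equiv \#\{\text{double points of }\beta\}\pmod 2 .
\]
It therefore suffices to show that the double points not fixed by $J$ come in pairs. Choosing the perturbation compatibly means the following. First, perturb $c$ $C$-equivariantly, which is possible since $C(c)=c$ as an unoriented circle. Second, take $\pi$ to be $(C,J)$-equivariant, i.e.\ $\pi C=J\pi$, which is the compatibility of the real structures. Then $\beta(c)=\pi(c)$ is a $J$-invariant immersed curve. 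Genericity must be achieved within the class of $J$-symmetric immersions, so that there are only transverse double points and no triple points.

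The key step is to show that $J$ maps the set of double points of $\beta$ to itself. A double point $x=\beta(t)=\beta(u)$ corresponds to two distinct points $c(t)\ne c(u)$ of $c$ lying in the same fiber $\pi^{-1}(x)$. Their images $C(c(t))$ and $C(c(u))$ are again distinct points of $c$, and both lie in $\pi^{-1}(J(x))$. Hence $J(x)$ is a double point. Since $J$ is an involution, the non-fixed double points split into disjoint orbits $\{x,J(x)\}$ of size two. Each such orbit contributes $2\equiv 0$ to the count. This gives the stated congruence.

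One should also check that the pairing is well defined as a pairing of double points rather than of pairs of parameters. Because there are no triple points, each double point carries exactly one unordered pair $\{t,u\}$, and the count in Theorem~\ref{thm:crossing} runs over exactly these unordered pairs. The symmetry remark at the end of that proof already shows that the summand does not depend on the ordering. As an aside, the relation $CSC=S^{-1}$ implies that $j$ is preserved under the pairing. Indeed, the sheet offset is sent to $-j$, and the swapped ordering sends it back to $j$. This fact is not needed for the parity argument.

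The main obstacle I expect is the existence of a generic perturbation that is also compatible with $J$. Near $J$-fixed points of $\beta(c)$, symmetric perturbations cannot always remove tangencies. At a $J$-fixed tangency one has to perturb in the $J$-anti-invariant normal direction, which should split it into transverse crossings. Those crossings are either $J$-fixed or come in $J$-pairs. I would handle this by a local model computation in real coordinates near each fixed point, using the fact that $\operatorname{Fix}(J)$ is a $1$-manifold. Triple points can be avoided in the same way. A triple point would require three points of $c$ in a single $\pi$-fiber, and this is a codimension condition that symmetric perturbations still destroy, since $c$ avoids $\pi^{-1}(D)$.
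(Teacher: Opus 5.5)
Your proof is correct and follows the paper's argument: reduce the signed sum of Theorem~\ref{thm:crossing} modulo two, use $C(c)=c$ and $\pi C=J\pi$ to see that $J$ permutes the double points of $\beta$, and discard the non-fixed double points, which come in $J$-pairs. The paper also observes that at a $J$-fixed double point $C$ exchanges the two lifts, because an involution of the three-sheet fiber conjugating $S$ to $S^{-1}$ fixes exactly one sheet; this is used later in Corollary~\ref{cor:real-translate}, and since the statement assumes a compatible generic perturbation, your last paragraph on its existence is not required.
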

\begin{proof}
The involution $J$ preserves the set of double points of $\beta$. Nonfixed double points occur in pairs, each of whose two summands is odd modulo two. A fixed double point contributes one modulo two. Theorem~\ref{thm:crossing} gives the claim. At an unbranched real point of $G$, the involution on the three-sheet fiber fixes exactly one sheet, since it conjugates $S$ to $S^{-1}$. The two lifts on the $C$-invariant curve $c$ form a $C$-stable set of two distinct sheets. They cannot both be fixed, so $C$ exchanges them.
\end{proof}

Put $R_j=\operatorname{Fix}(S^jC)$ for $j=0,1,2$. The parity can also be computed on $F$.

\begin{corollary}\label{cor:real-translate}
Under the hypotheses of Corollary~\ref{cor:real-parity}, suppose also that $c$ meets $R_1$ transversely and avoids the fixed points of $S$. Then
\[
[c]\cdot S_*[c]\equiv [c]\cdot[R_1]\pmod2.
\]
\end{corollary}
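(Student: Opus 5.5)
The plan is to combine Corollary~\ref{cor:real-parity} with a bijection between the $J$-fixed double points of $\beta$ and the points of $c\cap R_1$. Once the bijection is established, the right-hand side of Corollary~\ref{cor:real-parity} is $\#(c\cap R_1)$. Since $c$ meets $R_1$ transversely, this count is congruent modulo two to $[c]\cdot[R_1]$. Here $R_1$ is a union of circles, because $SC$ is an antiholomorphic involution: $(SC)^2=SCSC=SS^{-1}=1$. Its mod-two class is therefore defined.

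\emph{Step 1 (fibres over real points).} Take a $J$-fixed point $g\in G$ that is not a branch point. Its fibre $\pi^{-1}(g)=\{x,Sx,S^2x\}$ is preserved by $C$. The relation $CSC=S^{-1}$ shows that each of the three involutions $C$, $SC$, $S^2C$ fixes exactly one point of the fibre, and these points are distinct. This is the same reasoning as at the end of the proof of Corollary~\ref{cor:real-parity}. Consequently each of $R_0,R_1,R_2$ meets $\pi^{-1}(g)$ in exactly one point.

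\emph{Step 2 (the bijection).} First, let $x\in c\cap R_1$. Then $SCx=x$, so $Cx=S^2x\neq x$. Since $c$ is $C$-invariant, $Cx\in c$. Hence $\pi(x)=\pi(Cx)$ is a double point of $\beta$, and it is $J$-fixed because $J\pi(x)=\pi(Cx)$. Conversely, let $g$ be a $J$-fixed double point with lifts $x,y\in c$. By Corollary~\ref{cor:real-parity}, $C$ exchanges $x$ and $y$, and $y=S^jx$ with $j\in\{1,2\}$.
\begin{itemize}
\item If $j=2$, then $x\in R_1$, and a direct check gives $y\in R_2$.
\item If $j=1$, then $x\in R_2$ and $y\in R_1$.
\end{itemize}
In either case exactly one lift lies on $R_1$, and by Step 1 no other point of $c\cap R_1$ lies over $g$. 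The two constructions are mutually inverse. The hypothesis that $c$ avoids the fixed points of $S$ ensures that all these points lie over unbranched points.

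\emph{Step 3 (compatibility of genericity).} This is the step I expect to need the most care. The perturbation in Corollary~\ref{cor:real-parity} must be compatible with $C$ and must keep $c\cap R_1$ transverse. It must also not create or destroy points of $c\cap R_1$. I would choose the perturbation $C$-equivariantly and supported away from $R_1$ except for small isotopies along it, which preserve the count modulo two.

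Transversality of the double point at $g$ must also match transversality of $c$ with $R_1$. The two branches of $\beta$ at $g$ have tangents $v=d\pi(\dot c)$ at $x$ and $dJ\,v$ at $Cx$. They are transverse precisely when $v$ does not lie in the $(+1)$-eigenline of $dJ$, that is, in $T_gG(\mathbb R)$. Near $x$, $\pi$ maps $R_1$ diffeomorphically onto $G(\mathbb R)$. So this condition is exactly that $c$ is transverse to $R_1$ at $x$.

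With these points checked, Corollary~\ref{cor:real-parity} and the bijection give the stated congruence.
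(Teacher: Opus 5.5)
Your proposal is correct and is the paper's argument: $x\mapsto\pi(x)$ is a bijection from $c\cap R_1$ onto the $J$-fixed double points of $\beta$, and Corollary~\ref{cor:real-parity} then gives the congruence, with your Steps~1--2 making explicit the paper's claim that exactly one of the two lifts lies on $R_1$. One slip in Step~3 is harmless. The two branches at $g$ are also tangent when $v$ lies in the $(-1)$-eigenline of $dJ$, so a transverse double point implies that $c$ is transverse to $R_1$ at $x$, but not conversely; only that direction (or simply the stability of transverse intersections under a small perturbation) is needed.
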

\begin{proof}
The real involution $C$ exchanges $R_1$ and $R_2$. If $z\in c\cap R_1$, then $S C z=z$, so $Cz=S^2z$ is a distinct point of $c\cap R_2$. The pair $z,Cz$ projects to one double point fixed by the involution on $G$. Conversely, at a fixed projected double point the two branches are exchanged by $C$; of its two lifts on $c$, exactly one lies in $R_1$. The result follows from Corollary~\ref{cor:real-parity} and the mod-two intersection count.
\end{proof}

For the Cartwright--Steger fiber, isotope the transported vanishing cycle $c$ off the three fixed points. For the real parity formula this can be done $C$-equivariantly: in a small real coordinate disk at a fixed point, move any transverse intersection of $c$ with the real axis slightly along that axis, away from the fixed point, and apply the reflected local perturbation. The three branch points of $\pi$ have $\bar\ell$-values $1,2,3$ by Proposition~\ref{prop:three-labels}. Write
\[
m=[c]\cdot S_*[c].
\]
If $\beta$ is embedded, then $m=0$. The two real ovals alone give no count of the real double points of $\beta$.

The fixed loci of the three antiholomorphic involutions $S^jC$ have an intersection property independent of $c$.

\begin{proposition}\label{prop:reflection-loci}
For distinct $i,j\in\{0,1,2\}$, the curves $R_i=\operatorname{Fix}(S^iC)$ and $R_j=\operatorname{Fix}(S^jC)$ meet precisely at $p_7,p_8,p_9$, and they meet transversely there. In particular,
\[
[R_i]\cdot[R_j]=1\pmod 2.
\]
\end{proposition}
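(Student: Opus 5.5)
The plan is a direct pointwise argument: first identify the set $R_i\cap R_j$ using only the group relations, then check transversality on tangent spaces at the three fixed points, and finally read off the mod-two count. Throughout, $\{i,j\}\subset\{0,1,2\}$ with $i\neq j$. I use two standard facts. Each $S^kC$ is an antiholomorphic involution, since $(S^kC)^2=S^kCS^kC=S^kS^{-k}=1$ by $CSC=S^{-1}$. The fixed locus of an antiholomorphic involution of a compact Riemann surface is a smooth compact real $1$-manifold, by linearizing at a fixed point in a local holomorphic coordinate where the involution becomes $w\mapsto\bar w$. So each $R_k$ is a finite union of embedded circles and has a well-defined mod-two class.

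\emph{Step 1: the set-theoretic intersection.} Suppose $z\in R_i\cap R_j$. Then $S^iCz=z=S^jCz$. Applying $S^{-j}$ gives $S^{i-j}(Cz)=Cz$. Since $i-j\not\equiv0\pmod 3$, the element $S^{i-j}$ generates $\langle S\rangle$, so $Cz$ is fixed by $S$. Hence $Cz\in\{p_7,p_8,p_9\}$.

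These three points are real, so $z=C(Cz)=Cz$, and $z$ is one of $p_7,p_8,p_9$. Conversely, if $p$ is a real fixed point of $S$, then $S^kCp=S^kp=p$ for every $k$. So each $p_7,p_8,p_9$ lies on all three curves $R_0,R_1,R_2$.

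\emph{Step 2: transversality at $p\in\{p_7,p_8,p_9\}$.} Because $p$ is an isolated fixed point of the order-three automorphism $S$, the differential $dS_p$ is multiplication by a primitive cube root of unity $\zeta=e^{\pm2\pi i/3}$ on $T_pF\cong\mathbb C$. Since $C$ fixes $p$, choose a holomorphic coordinate $w$ at $p$ in which $C$ is $w\mapsto\bar w$; then $dC_p$ is complex conjugation. This is compatible with $CSC=S^{-1}$, since $\overline{\zeta\bar v}=\zeta^{-1}v$.

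Then $d(S^kC)_p(v)=\zeta^k\bar v$. Its fixed set is the real line $T_pR_k=e^{\pm\pi i k/3}\mathbb R$, because $re^{i\theta}$ is fixed exactly when $2\theta\equiv\pm2\pi k/3\pmod{2\pi}$. For $k\neq k'$ these lines meet at angle $\pi/3$ or $2\pi/3$, hence are distinct. Two smooth curves with distinct tangent lines at a common point meet transversely there.

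\emph{Step 3: the mod-two count.} By Steps 1 and 2, $R_i$ and $R_j$ are closed $1$-manifolds meeting transversely in exactly three points. Therefore $[R_i]\cdot[R_j]\equiv 3\equiv 1\pmod 2$.

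The only step needing real care is Step 2, in particular the simultaneous normal form for $dS_p$ and $dC_p$. I handle it purely at the tangent level, so no simultaneous linearization of the actual maps is needed: transversality of two smooth curves is detected by their tangent lines, and those lines are determined by $dS_p$ and $dC_p$ alone. The fact that $dS_p\neq 1$ follows because a holomorphic automorphism of finite order with trivial differential at a fixed point is the identity near that point, whereas $S$ has isolated fixed points.
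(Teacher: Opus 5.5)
Your proposal is correct and follows the paper's proof. The relation $CSC=S^{-1}$ forces any common fixed point of $S^iC$ and $S^jC$ to be fixed by $S$; you route this through $Cz$ and the reality of $p_7,p_8,p_9$, whereas the paper uses $(S^iC)(S^jC)=S^{i-j}$ directly. The local model, with $dS_p$ a primitive cube root of unity and $dC_p$ a reflection, gives distinct tangent lines, and the three transverse points give an odd count. Your tangent-level treatment of transversality is a slightly more careful version of the paper's remark about a local holomorphic coordinate.
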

\begin{proof}
A point fixed by both $S^iC$ and $S^jC$ is fixed by their product $S^{i-j}$, hence by $S$. Conversely, the three fixed points of $S$ are real and are fixed by every $S^iC$. In a local holomorphic coordinate at any such point, $S$ acts by a nontrivial cube root of unity and $C$ is a reflection. The fixed real lines of $S^iC$ are distinct for distinct $i$, so the intersections are transverse. There are three of them, giving the stated mod-two intersection.
\end{proof}

This intersection does not give the parity of $m$: Corollary~\ref{cor:real-translate} requires the intersection of the particular vanishing cycle with $R_1$.

\section{The monodromy relation}

Write $v_i=S_*^i[c]$. Since $\sigma_E(z)=\omega^2z$, the positive cyclic order of the three paths near $O$ is $\gamma,\sigma_E^2\gamma,\sigma_E\gamma$. We use the algebraic intersection convention for which a positive twist acts by $T_v(x)=x+(v\cdot x)v$. By $S$-invariance of intersection,
\[
v_0\cdot v_1=v_1\cdot v_2=v_2\cdot v_0=m.
\]

\begin{proposition}\label{prop:trace}
For the product $M=T_{v_0}T_{v_2}T_{v_1}$ in this cyclic order, acting on $H_1(F;\Q)$,
\[
\operatorname{tr}(M)=38-3m^2-m^3,
\qquad
\det(zI-M)=(z-1)^{36}
\bigl[z^2-(2-3m^2-m^3)z+1\bigr].
\]
For the reverse product $T_{v_0}T_{v_1}T_{v_2}$, replace $m$ by $-m$ in these formulas.
\end{proposition}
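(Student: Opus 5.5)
The plan is to reduce the characteristic polynomial of the $38\times 38$ matrix $M$ to a $3\times 3$ computation by the Sylvester determinant identity. Each twist satisfies $T_v=I+v\,(v\cdot -)$, so $M-I$ factors through $\Q^3$: $M-I=AB$. Here $B:H_1(F;\Q)\to\Q^3$ is $x\mapsto(v_1\cdot x,\;v_2\cdot x,\;v_0\cdot x)$, suitably corrected, and $A:\Q^3\to H_1(F;\Q)$ sends the standard basis to $v_1,v_2,v_0$. Then
\[
\det(zI-M)=(z-1)^{35}\det\bigl((z-1)I_3-BA\bigr)\quad\text{and}\quad \operatorname{tr}M=38+\operatorname{tr}(BA).
\]
These identities hold whether or not $v_0,v_1,v_2$ are independent, so no case split is needed.

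To get $A$ and $B$ explicitly, I expand the product from the right. Put $a_i=v_i\cdot x$ and use the convention $T_v(x)=x+(v\cdot x)v$ together with $v_0\cdot v_1=v_1\cdot v_2=v_2\cdot v_0=m$; antisymmetry then gives $v_2\cdot v_1=v_0\cdot v_2=v_1\cdot v_0=-m$.

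First, $T_{v_1}x=x+a_1v_1$. Next, applying $T_{v_2}$ adds $(a_2-ma_1)v_2$. Finally, applying $T_{v_0}$ adds $c_0v_0$, where
\[
c_0=v_0\cdot\bigl(x+a_1v_1+(a_2-ma_1)v_2\bigr)=a_0+ma_1-m(a_2-ma_1).
\]
Thus $Mx=x+a_1v_1+(a_2-ma_1)v_2+c_0v_0$. Writing $c=(c_1,c_2,c_0)=(a_1,\,a_2-ma_1,\,c_0)$, we have $c=L(a_1,a_2,a_0)$ with $L$ unipotent lower triangular, and $BA=L\,\Gamma$, where $\Gamma$ is the $3\times3$ Gram matrix $(v_i\cdot v_j)$ in the order $(v_1,v_2,v_0)$. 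I will then compute $\operatorname{tr}(L\Gamma)$ and $\det(wI-L\Gamma)$ with $w=z-1$ directly; these are polynomial identities in $m$.

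The structure can be anticipated before computing. Since $\Gamma$ is skew of odd size, $\det\Gamma=0$, so $w=0$ is a root and supplies the extra factor $(z-1)$, giving $(z-1)^{36}$ in total. The remaining quadratic factor is reciprocal because $M$ is symplectic, so its constant term is $1$. Its $z$-coefficient is then forced by the trace: it equals $-(\operatorname{tr}M-36)$. So it suffices to verify $\operatorname{tr}(L\Gamma)=-3m^2-m^3$. The diagonal of $\Gamma$ vanishes, so only the off-diagonal entries of $L$ (namely $-m$, $m+m^2$, and $-m$) pair with entries $\pm m$ of $\Gamma$. This produces the quadratic and cubic terms, and checking the sum is routine bookkeeping.

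For the reverse product $T_{v_0}T_{v_1}T_{v_2}$, the same computation with the order $(v_2,v_1,v_0)$ replaces every relevant pairing $v_i\cdot v_j$ by $v_j\cdot v_i=-m$. Since the formulas are polynomial in $m$, this gives the substitution $m\mapsto-m$.

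I expect the main obstacle to be sign bookkeeping: the antisymmetry signs $v_0\cdot v_2=-m$ and $v_2\cdot v_1=-m$, and keeping the cyclic order $\gamma,\sigma_E^2\gamma,\sigma_E\gamma$ consistent with the convention $T_v(x)=x+(v\cdot x)v$. I will double-check the trace against the case $m=1$, where $v_0,v_2,v_1$ behave like a chain and the product should have trace $38-4=34$ on the relevant part.
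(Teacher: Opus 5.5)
Your proof is correct, but you obtain the characteristic polynomial by a different mechanism than the paper. The paper writes $T_{v_i}=I+N_i$ with $N_i(x)=(v_i\cdot x)v_i$ and reads off the trace from $\operatorname{tr}N_i=0$, $\operatorname{tr}(N_iN_j)=-m^2$ and $\operatorname{tr}(N_0N_2N_1)=-m^3$. It then gets $(z-1)^{36}$ abstractly: $M-I$ has rank at most three, so $1$ has algebraic multiplicity at least $35$, and for a symplectic matrix this multiplicity is even.

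You instead factor $M-I$ through $\Q^3$ and apply Sylvester's identity. Your thirty-sixth factor of $z-1$ therefore comes from $\det(L\Gamma)=\det\Gamma=0$ for the odd-size skew Gram matrix, not from symplectic parity. Your entries of $L$ are right, and so is your trace, $-m^2-(m+m^2)m-m^2=-3m^2-m^3$. The principal $2\times2$ minors of $L\Gamma$ sum to $3m^2+m^3$, so the constant term $1+\operatorname{tr}(L\Gamma)+e_2(L\Gamma)=1$ can be checked directly. It also follows at once from $\det M=1$, since each transvection is unipotent, so you do not need reciprocity.

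The paper's route is shorter and needs no explicit matrix. Yours reduces everything to a $3\times3$ computation in the ordered pairings. That makes the $m\mapsto-m$ rule for the reverse order transparent, since all three ordered pairings change sign, and it would work for arbitrary pairings.

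One caution about your planned sanity check. At $m=1$ the three classes pairwise satisfy $|v_i\cdot v_j|=1$, so they form a triangle, not a chain. For a genuine chain ($v_0\cdot v_1=0$) the same computation gives trace $36$, not $34$. The value $34$ does match the formula, but not for the reason you give.
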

\begin{proof}
Set $N_i(x)=(v_i\cdot x)v_i$. Then $T_{v_i}=I+N_i$, $\operatorname{tr}N_i=0$, and for $i\ne j$, $\operatorname{tr}(N_iN_j)=-(v_i\cdot v_j)^2=-m^2$. The trace of $N_0N_2N_1$ is $-m^3$. Expanding the product proves the trace formula. Its difference from the identity has rank at most three, so the eigenvalue $1$ has algebraic multiplicity at least $35$. For a symplectic matrix of dimension $38$ this multiplicity is even, hence at least $36$. The other two eigenvalues have sum $2-3m^2-m^3$ and product one, which gives the displayed polynomial.
\end{proof}

The nodal homology action gives a direct formula for $m$.

\begin{proposition}\label{prop:single-trace}
Let $T=T_{[c]}$ act on $H_1(F;\Q)$ by $T(x)=x+([c]\cdot x)[c]$. Then
\[
m=1+\operatorname{tr}(S_*T).
\]
Moreover,
\[
m^2=38-\operatorname{tr}(T S_*T S_*^{-1}).
\]
\end{proposition}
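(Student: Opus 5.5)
The plan is to compute both traces directly from the rank-one decomposition of the Dehn twist used in Proposition~\ref{prop:trace}, together with the Lefschetz fixed point formula for $S$. Write $T=I+N$ with $N(x)=([c]\cdot x)[c]$.

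For the first identity, linearity gives
\[
\operatorname{tr}(S_*T)=\operatorname{tr}(S_*)+\operatorname{tr}(S_*N).
\]
The map $S_*N$ is the rank-one map $x\mapsto([c]\cdot x)\,S_*[c]$. The trace of a rank-one map $x\mapsto\phi(x)w$ is $\phi(w)$. Hence
\[
\operatorname{tr}(S_*N)=[c]\cdot S_*[c]=m.
\]
Next I compute $\operatorname{tr}(S_*)$ on $H_1(F;\Q)$ using the Lefschetz fixed point theorem. The map $S$ is a nontrivial holomorphic automorphism of finite order. It has exactly the three isolated fixed points $p_7,p_8,p_9$, and each has local index $+1$, since locally it is a rotation $z\mapsto\zeta z$ with $\zeta\neq1$. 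On $H_0$ and $H_2$ the map $S$ acts trivially, because it preserves orientation. Therefore
\[
3=L(S)=1-\operatorname{tr}(S_*|H_1)+1,
\]
which gives $\operatorname{tr}(S_*)=-1$. Combining, $\operatorname{tr}(S_*T)=m-1$, which is the first formula.

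For the second identity, I use naturality of Dehn twists: $S_*T_{[c]}S_*^{-1}=T_{S_*[c]}$. This holds because $S$ preserves the intersection form, being an orientation-preserving diffeomorphism. Thus, with $v_0=[c]$ and $v_1=S_*[c]$,
\[
TS_*TS_*^{-1}=T_{v_0}T_{v_1}=(I+N_0)(I+N_1).
\]
The trace of this product is $38+\operatorname{tr}N_0+\operatorname{tr}N_1+\operatorname{tr}(N_0N_1)$. Here $\operatorname{tr}N_i=v_i\cdot v_i=0$. The composite $N_0N_1$ is the rank-one map $x\mapsto(v_1\cdot x)(v_0\cdot v_1)v_0$, whose trace is
\[
(v_0\cdot v_1)(v_1\cdot v_0)=-m^2.
\]
This gives $38-m^2$, as claimed.

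The only step needing care is the fixed-point count. One must confirm that $S$ has no fixed points beyond $p_7,p_8,p_9$, which holds since these are the total ramification points of $\pi$ and Riemann--Hurwitz accounts for all ramification. One must also confirm that each index is $+1$ with the complex orientation. Everything else is bookkeeping with rank-one traces and the sign convention $T_v(x)=x+(v\cdot x)v$ fixed earlier.
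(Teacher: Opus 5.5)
Your proof is correct and is essentially the paper's argument. The Lefschetz number $3$ gives $\operatorname{tr}(S_*)=-1$, the rank-one operator $S_*T-S_*$ has trace $m$, and the second identity comes from expanding $T_{[c]}T_{S_*[c]}$; you make the conjugation identity $S_*TS_*^{-1}=T_{S_*[c]}$ explicit where the paper leaves it implicit. One small caveat: the fact that $S$ has exactly three fixed points is an input from the fixed-point data of Cartwright--Koziarz--Yeung, and Riemann--Hurwitz cannot confirm it here, because the paper derives $g(G)=6$ from that very count.
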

\begin{proof}
The three fixed points of $S$ give a Lefschetz number of $3$, hence $\operatorname{tr}(S_*|H_1(F;\Q))=-1$. The operator $S_*T-S_*$ has rank one: it sends $x$ to $([c]\cdot x)S_*[c]$, so its trace is $[c]\cdot S_*[c]=m$. For the second identity, expand the two transvections $T_{[c]}T_{S_*[c]}$. Each rank-one term has trace zero and their product has trace $-m^2$.
\end{proof}

For a Lefschetz fibration over a torus, the commutator of the handle monodromies is a product of positive Dehn twists about the vanishing cycles \cite{SY}. The order-three symmetry constrains these twists. The quotient of the elliptic base by $\sigma_E$ is a sphere with three orbifold points of order three. The three nodal values have one image in this quotient. Choose a positively oriented loop $p$ about the orbifold point corresponding to $O$, and take $d$ around the image of the nodal values along the projection of $\gamma$. Choose loops $q,r$ about the other two orbifold points, with connecting paths and order so that $pqrd=1$. The orbifold fundamental group of the quotient base and the character defining the degree-three cover are
\[
\Delta=\langle p,q,r,d\mid p^3=q^3=r^3=1,\ pqrd=1\rangle,
\qquad
\epsilon(p)=\epsilon(q)=\epsilon(r)=1,\quad\epsilon(d)=0\in\Z/3.
\]
The kernel $K=\ker\epsilon$ is the fundamental group of the torus with its three critical values deleted. We compose loops and mapping classes so that monodromy is a homomorphism. The peripheral loops are counterclockwise meridians. With these choices, the commutator of the two handle generators equals the product of the three transported meridians.

Over the sphere with its four distinguished values removed, the quotient of the smooth Albanese fibration by $\langle\sigma\rangle$ is a smooth $F$-bundle: the action on the punctured base is free. Near any of the three omitted invariant smooth fibers, equivariant local trivialization identifies the monodromy about the corresponding orbifold point with a periodic mapping class of order three. After choosing a reference fiber and connecting paths, we obtain an orbifold monodromy representation $\rho:\Delta\to\Mod(F)$. Its restriction to $K$ is the monodromy of the original fibration over the punctured elliptic base. The quotient total space can have cyclic quotient singularities over the orbifold points; the orbifold formulation accounts for them.

\begin{proposition}\label{prop:schreier}
Let
\[
A=qp^{-1},\qquad B=pqp^{-2},\qquad D_i=p^idp^{-i}\quad(i=0,1,2),
\]
and use $[x,y]=xyx^{-1}y^{-1}$. Then $K$ has the presentation
\begin{equation}\label{eq:schreier}
[A,B]=D_0(BD_2B^{-1})(BA D_1A^{-1}B^{-1}).
\end{equation}
If $\rho:\Delta\to\Mod(F)$ is the orbifold monodromy representation with $\rho(d)=T_c$, the three peripheral factors in \eqref{eq:schreier} are positive Dehn twists, with the indicated transport conjugations.
\end{proposition}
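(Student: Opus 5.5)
The plan is to apply the Reidemeister--Schreier method to the index-three subgroup $K=\ker\epsilon$ of $\Delta$, with Schreier transversal $\{1,p,p^2\}$, and then eliminate generators until a single relation remains. Since $\epsilon(p)=1$ generates $\Z/3$, this transversal is valid. For a coset representative $t$ and a generator $x\in\{p,q,r,d\}$, the Schreier generators are $t\,x\,\overline{tx}^{-1}$. The $p$-generators are $p\cdot p^{-1}=1$, $p^2\cdot p^{-2}=1$ and $p^2\cdot p=p^3$. The last is trivial by the relation $p^3=1$, so we drop it. The $q$-generators are $A=qp^{-1}$, $B=pqp^{-2}$ and $C=p^2q$. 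The $r$-generators are $A'=rp^{-1}$, $B'=prp^{-2}$ and $C'=p^2r$. Since $\epsilon(d)=0$, the $d$-generators are $D_i=p^idp^{-i}$.

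The relations of $K$ are the rewrites of $t\,\mathrm{rel}\,t^{-1}$ for each defining relator and each $t$.
\begin{enumerate}
\item Rewriting $q^3$ gives relations of the form $ABC=1$ and its cyclic shifts. These eliminate $C$, and the shifts are then conjugate consequences.
\item Likewise $r^3$ eliminates $C'$.
\item The relator $pqrd$, conjugated by $1,p,p^2$, rewrites to three words. Each has the shape (a $q$-generator)(an $r$-generator)$D_i$, up to trivial $p$-generators.
\end{enumerate}
Two of the relations in (3) solve for $A'$ and $B'$ in terms of $A,B,D_i$. Substituting into the third relation, and into the $r^3$ relation, should leave exactly one relation among $A,B,D_0,D_1,D_2$. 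I will then rearrange it by cyclic conjugation and inversion into the form \eqref{eq:schreier}. If the conventions produce a mirrored or conjugated word, I will replace $A,B$ by the corresponding Schreier words and redo the check.

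As a consistency check, the orbifold Euler characteristic of $\Delta$ is $2-3\cdot\tfrac23-1=-1$. The index-three cover therefore has Euler characteristic $-3$, which is that of a torus minus three points. Its fundamental group is free of rank four, and the one-relator group on five generators with a relation of the shape $[A,B]=D_0D_2'D_1'$ (each $D_i'$ a conjugate of $D_i$) is exactly the standard presentation of that surface group. This confirms that no relations are lost. It also confirms that $D_0$, $BD_2B^{-1}$ and $BAD_1A^{-1}B^{-1}$ are peripheral loops around the three punctures, since each is conjugate to a lift of $d$.

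For the monodromy statement, set $\rho(d)=T_c$. Then $\rho(D_i)=\rho(p)^iT_c\rho(p)^{-i}=T_{\rho(p)^i(c)}$. Moreover, $\rho(B)T_{x}\rho(B)^{-1}=T_{\rho(B)(x)}$, and similarly for $BA$, because conjugating a positive Dehn twist by an orientation-preserving mapping class gives the positive twist about the image curve. Hence each of the three factors in \eqref{eq:schreier} is a positive Dehn twist, about the transported curves $\rho(B)\rho(p)^2(c)$ and $\rho(BA)\rho(p)(c)$ respectively. Here $\rho(p)$ is the order-three periodic class realized by $S$ after transport.

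I expect the main obstacle to be the word bookkeeping. The left/right composition convention, the choice of which $r$-generators to eliminate first, and the cyclic order $\gamma,\sigma_E^2\gamma,\sigma_E\gamma$ must all combine so that the relation appears with $D_0$, then $D_2$, then $D_1$ and precisely the conjugators $B$ and $BA$. I would carry out the elimination symbolically once, then verify the final identity by substituting $A=qp^{-1}$, $B=pqp^{-2}$ and using only $p^3=q^3=r^3=1$ and $pqrd=1$.
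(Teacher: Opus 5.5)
Your proposal is correct and takes the same route as the paper: Reidemeister--Schreier rewriting for the transversal $\{1,p,p^2\}$, then elimination down to one relation, with the index-three covering (a torus minus three points) confirming that no relations are lost. Your elimination gives $A'=ABD_1^{-1}$, $B'=A^{-1}D_2^{-1}$ and the relator $BD_2AD_1B^{-1}A^{-1}D_0$, which is a cyclic conjugate of \eqref{eq:schreier}, so no change of conventions is needed; the paper gets the same word faster by first substituting $r=(pq)^{-1}d^{-1}$ and expanding $(dL)^3$ with $L=pq=Bp^2$, $L^2=BAp$, $L^3=[B,A]$.
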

\begin{proof}
Use $1,p,p^2$ as coset representatives for $K$ in $\Delta$. Reidemeister--Schreier rewriting gives the stated generators. To check the relation explicitly, put $L=pq$. Since $q=Ap$ and $B=pAp^{-1}$, the relation $q^3=1$ implies
\[
L=Bp^2,\qquad L^2=BAp,\qquad L^3=[B,A].
\]
The relation $pqrd=1$ gives $r=L^{-1}d^{-1}$, so $r^3=1$ is equivalent to $(dL)^3=1$. Expanding this last word yields
\[
(dL)^3=D_0(BD_2B^{-1})(BA D_1A^{-1}B^{-1})[B,A].
\]
Since $[B,A]=[A,B]^{-1}$, this gives \eqref{eq:schreier}. The resulting group is a genus-one surface group with three peripheral generators, as its covering interpretation also shows. Applying $\rho$ sends each conjugate of $d$ to a conjugate of the positive twist $T_c$.
\end{proof}

The monodromy about the orbifold point corresponding to $O$ can be identified with $S$ after fixing a local marking.

\begin{theorem}\label{thm:marked}
Identify a nearby reference fiber of the quotient fibration with $F$ by equivariant radial transport toward $O$. Let $P=\rho(p)$, $Q=\rho(q)$, $R=\rho(r)$, and $T=\rho(d)=T_c$, with the above choice of the positively oriented loop $p$. Then
\[
P=S,\qquad B=SAS^{-1},\qquad Q=AS,\qquad
R=(T_cSAS)^{-1},
\]
and, in particular,
\begin{equation}\label{eq:marked-constraints}
(AS)^3=1,\qquad (SAS T_c)^3=1.
\end{equation}
\end{theorem}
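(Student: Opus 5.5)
The plan has two parts. First, identify $P=\rho(p)$ with $S$ by a local computation near the orbifold point of $O$. Then read off the remaining identities from the group-theoretic relations already recorded in the proof of Proposition~\ref{prop:schreier}. In that statement $A$ and $B$ denote both the elements of $K$ and, by abuse, their images $\rho(A),\rho(B)$.

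\textbf{Step 1: $P=S$.} Near $O$, choose an equivariant local trivialization of $\alpha$ over a small $\sigma_E$-invariant disk $U$ around $O$; such a trivialization exists because $F$ is smooth and $\sigma$ acts holomorphically. Write $\alpha^{-1}(U)\cong F\times U$ with $\sigma(x,z)=(S x,\omega^2 z)$. This can be arranged, after shrinking $U$, by averaging a connection over $\langle\sigma\rangle$ and using radial parallel transport toward $O$. The quotient $(F\times U)/\langle\sigma\rangle$ restricted to the punctured disk is the mapping torus of $S$ or of $S^{-1}$: a small positive loop $p$ around the orbifold point lifts to an arc from $z$ to $\omega^{\pm1}z$ covering one third of a circle. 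Under the radial identification this arc returns the fiber over $z$ by the action of $\sigma$ or $\sigma^{-1}$.

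Fixing the sign is the main obstacle. Since $\sigma_E(z)=\omega^2 z$, the counterclockwise third-turn from $z$ ends at $\omega z=\sigma_E^{-1}(z)$. The gluing therefore identifies the fiber over $\omega z$ with the fiber over $z$ via $\sigma^{-1}$, and the monodromy is $\sigma$ or its inverse depending on the composition convention. With the paper's convention that composition makes $\rho$ a homomorphism, and that monodromy is the transport map back to the reference fiber, I expect the outcome $P=S$. This is exactly the point where the choice $\sigma_E(z)=\omega^2z$ and the positive orientation of $p$ enter, and I would check it against the cyclic order $\gamma,\sigma_E^2\gamma,\sigma_E\gamma$ used in Proposition~\ref{prop:trace}.

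\textbf{Step 2: algebraic consequences.} In $\Delta$ we have $A=qp^{-1}$, hence $q=Ap$, so $Q=AP=AS$. Also $B=pqp^{-2}=p(qp^{-1})p^{-1}=pAp^{-1}$, so $B=SAS^{-1}$. From $pqrd=1$ we get $r=(dpq)^{-1}$. Hence
\[
R=\bigl(T_c\,S\,AS\bigr)^{-1}.
\]

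\textbf{Step 3: the relations \eqref{eq:marked-constraints}.} These follow by applying $\rho$ to $q^3=1$ and $r^3=1$. The first gives $(AS)^3=1$. The second gives $(T_cSAS)^{-3}=1$, so $(T_cSAS)^3=1$. Conjugating by $T_c$ turns this into $(SAST_c)^3=1$.

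As a consistency check, $(AS)^3=1$ is equivalent to $L^3=[B,A]$ after substituting $L=pq\mapsto SAS$, recovering the computation in Proposition~\ref{prop:schreier}.
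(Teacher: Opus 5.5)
Your Steps 2 and 3 are correct and are exactly the paper's algebra. They give $Q=AP$, $B=PAP^{-1}$, and $R=(PQ)^{-1}T_c^{-1}=(T_cSAS)^{-1}$, with the cubic relations coming from $q^3=r^3=1$ and conjugation by $T_c$. The gap is Step 1, which is the only non-formal content of the theorem. You end with ``I expect the outcome $P=S$'' and leave the sign to an unspecified composition convention, so nothing in your argument distinguishes $P=S$ from $P=S^{-1}$. The distinction is real. $S$ has tangent multiplier $\omega^2$ at its three fixed points and $S^{-1}$ has $\omega$, so by Nielsen's classification they are not conjugate in $\Mod(F)$; the paper later uses this when it identifies $Q$ and $R$ with conjugates of $S^{-1}$. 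The proposed check against the cyclic order $\gamma,\sigma_E^2\gamma,\sigma_E\gamma$ cannot settle it, because that order governs the products of the twists $T_{v_i}$, not the orbifold meridian $p$.

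The missing step is a one-line computation in your own trivialization, and it is the paper's proof. With $\sigma(x,z)=(Sx,\omega^2z)$, follow the lift of the positive $u=z^3$ meridian starting at $(x,z)$. Radial transport keeps the $F$-coordinate fixed, so you arrive at $(x,\omega z)$. Since $\omega z=\sigma_E^2(z)$, this point is identified in the quotient with
\[
\sigma^{-2}(x,\omega z)=\sigma(x,\omega z)=(Sx,z).
\]
Your sentence about gluing ``via $\sigma^{-1}$'' is right only as a map from the fiber over $z$ to the fiber over $\omega z$. The map carrying the endpoint fiber back to the initial one is its inverse, $\sigma=\sigma^{-2}$. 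Hence the return map is $x\mapsto Sx$. With monodromy taken to be the return map, as the paper does, this gives $P=S$. No composition convention enters for a single loop; the convention only governs how products of loops correspond to products of mapping classes.
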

\begin{proof}
Choose a complex coordinate $z$ centered at $O$ in which $\sigma_E(z)=\omega^2z$. A local coordinate on $E/\langle\sigma_E\rangle$ is $u=z^3$. A positive $u$-meridian lifts from $z$ to $\omega z=\sigma_E^2(z)$. In the quotient bundle, identification of the endpoint fiber with the initial one uses $\sigma^{-2}=\sigma$. Radial transport to $F$ therefore identifies its return mapping class with $S$. The definitions $A=QP^{-1}$ and $B=PQP^{-2}$ give $Q=AS$ and $B=SAS^{-1}$. Finally $PQRT=1$ gives $R=(SAS)^{-1}T_c^{-1}=(T_cSAS)^{-1}$. The cubic constraint in \eqref{eq:marked-constraints} follows because $T_cSAS$ is conjugate to $SAST_c$.
\end{proof}

The preceding relations also involve monodromy away from the critical values. The handle generators $A,B$ in Proposition~\ref{prop:schreier} represent the two independent cycles of the elliptic base. The generator of $\pi_1(E(\mathbb R))\cong\Z$ is represented by the real circle, which passes through the critical value $b$. Thus the real circle itself has no smooth-fiber return map. Choose a small disk about $b$ and replace the segment through $b$ by a detour in that disk, keeping the rest of the circle fixed and taking $O$ as base point. The resulting loop in $E\setminus\{b,\sigma_Eb,\sigma_E^2b\}$ projects to a primitive class in $H_1(E;\Z)$; its based class is a word in $A,B,D_i$. The two local detours differ by insertion of a based conjugate of the meridian about $b$, so their return maps differ by the corresponding transported twist. Neither the local node calculation nor the order-three relations determine either return map. Computing one requires global parallel transport along the chosen detoured loop.

The fixed-point calculation in \cite[long version, Main Theorem~(e)]{CKY} gives three fixed points on each invariant fiber. It also gives type $\frac13(1,1)$ at the three points of $F$ and type $\frac13(1,2)$ at the six other fixed points. With our choice $\sigma_E(z)=\omega^2z$, the tangent multiplier on $F$ at its three fixed points is $\omega^2$, whereas the fiber tangent multipliers at each of the other two invariant fibers are $\omega$. Each quotient has genus six by Riemann--Hurwitz. The classification of periodic surface maps by quotient genus and rotation data \cite{Nielsen} therefore identifies $Q$ and $R$, separately, with conjugates of $S^{-1}$ in $\Mod(F)$. Their relative conjugators depend on transport between the invariant fibers and do not determine $A$. Thus the order-three equations in Theorem~\ref{thm:marked} are necessary but do not by themselves identify the Cartwright--Steger monodromy.

\section{Global transport}

Transport along $\gamma$ identifies the vanishing cycle on a nearby smooth fiber with $c\subset F$. Once $c$ is known on the marked fiber, Theorem~\ref{thm:crossing} computes $[c]\cdot S_*[c]$ from the crossings of $\pi(c)$ and their covering monodromy. Corollary~\ref{cor:real-translate} gives a second computation of its parity. The handle monodromy $A$ requires transport around a handle generator of $\pi_1(E)$.

The pencil of Proposition~\ref{prop:global-pencil} has degree $72$ on all but finitely many fibers; on $F$ it factors through the degree-three quotient and the degree-$24$ map. Computing $c$ requires continuation of its branch values and sheets along $\gamma$, with a change of projection or chart at exceptional Albanese values. The symmetry carries fibers along $\gamma$ and restricts to an automorphism only at invariant values.

At the real node, two branch values of the pencil coalesce. Their local behavior is as follows.

\begin{proposition}\label{prop:local-branch-pair}
Let $\theta=\sqrt[3]{1116}>0$, $q=6267859/1459^2$, and $\kappa=4q^2\theta/899$. In the affine chart $U_0=1$ at the real node, let
\[
u=U_8+\theta/2,\quad v=U_9-\theta,\qquad
z_1=2u+v,\quad z_2=41u+36v.
\]
Regarding $y$ as the Weierstrass coordinate on $E$, the Albanese coordinate $y\circ\alpha$ and the pencil function $h=L_0/L_1$, both restricted to $X$, have the following expansions in these local coordinates:
\[
y-y_*=-\kappa z_1z_2+O\bigl(\|(z_1,z_2)\|^3\bigr),\qquad
h=\frac{2}{3\theta}\left(z_1-\frac{z_2}{29}\right)
  +O\bigl(\|(z_1,z_2)\|^2\bigr).
\]
Consequently, for small $\delta=y-y_*>0$, the restriction of $h$ to the smooth fiber has two simple local branch values
\[
h_\pm=\pm\frac{4}{3\theta}\sqrt{\frac{\delta}{29\kappa}}+O(\delta).
\]
\end{proposition}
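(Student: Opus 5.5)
The proof splits into two parts: the two displayed expansions, which are a local computation at the node, and the branch values, which follow from them by elementary algebra. I will take the expansions from the explicit data and then derive the branch values.

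\emph{The expansions.} In the chart $U_0=1$ the node sits at $U_8=-\theta/2$, $U_9=\theta$, with the remaining coordinates determined. I will take the location of the node and the critical value $y_*$ from \cite{Szabo}. At the node, $U_8,U_9$ restrict to local holomorphic coordinates on $X$, so $u,v$ (hence $z_1,z_2$) are local coordinates. I will use the Borisov--Yeung equations \cite{BY} to express $y\circ\alpha$ as a function of $(u,v)$ near the node. The linear term vanishes because the point is critical. The Hessian is a real quadratic form in $(u,v)$; I will compute it and check that it factors as $-\kappa(2u+v)(41u+36v)$. This factorization also confirms that the node is a real saddle, since the two real factors give a real crossing, consistent with \cite{SS}.

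For $h=L_0/L_1=(U_7+U_8+U_9)/(U_7+2U_8+3U_9)$, I expand to first order using the local expression of $U_7$ in terms of $u,v$ on $X$. The value $h(\text{node})$ should be $0$, i.e.\ $L_0$ vanishes there; the stated expansion has no constant term. Then $dh$ at the node equals $dL_0/L_1$. I will compute it and rewrite it in the $(z_1,z_2)$ basis to get $\frac{2}{3\theta}(z_1-z_2/29)$.

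\textbf{Main obstacle.} The hardest step is extracting the exact Hessian constant $\kappa=4q^2\theta/899$ and the gradient of $U_7$ from the bicanonical model. This is heavy exact arithmetic in $\mathbb Q(\theta)$. I would first try implicit differentiation of the defining equations at the node to obtain the second-order jet, then match against Szab\'o's Albanese formula. Note that $899=29\cdot31$ is plausibly the discriminant-type factor that combines with $29$ below.

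\emph{The branch values.} Given the expansions, set $a=\frac{2}{3\theta}$ and rescale by $w_1=z_1$, $w_2=z_2/29$. The fiber near the node is then $-29\kappa\,w_1w_2=\delta+O(3)$, and $h=a(w_1-w_2)+O(2)$. Critical points of $h$ restricted to the fiber are found by Lagrange multipliers on the quadratic model. The model fiber is parametrized by $w_1=t$, $w_2=-\delta/(29\kappa t)$, giving
\[
h=a\bigl(t+\delta/(29\kappa t)\bigr).
\]
Setting $h'(t)=0$ gives $t^2=\delta/(29\kappa)$, hence
\[
h_\pm=\pm 2a\sqrt{\delta/(29\kappa)}=\pm\frac{4}{3\theta}\sqrt{\delta/(29\kappa)}.
\]
The second derivative $2a\delta/(29\kappa t^3)\neq0$, so the critical points are nondegenerate and the branch values are simple. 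Since $\kappa>0$ and $\delta>0$, the square root is real.

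Finally, the higher-order terms must be controlled. In the rescaled coordinates $w=\sqrt\delta\,\tilde w$, the cubic terms of $y$ and the quadratic terms of $h$ contribute relative corrections of order $\sqrt\delta$. The implicit function theorem applied to the nondegenerate critical points then shows that $h_\pm$ move by $O(\delta)$.
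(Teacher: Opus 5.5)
Your proposal is correct and takes essentially the paper's route: the Hessian of $y$ comes from Szab\'o's node computation, and the linear term of $h$ comes from implicit differentiation of the Borisov--Yeung equations. That differentiation gives $dU_7=-(12\,du+36\,dv)/29$, hence $dL_0=dz_1-dz_2/29$ with $L_1=3\theta/2$, and the branch values follow from the same substitution $z_2=-\delta/(\kappa z_1)$. The paper cites Szab\'o's intrinsic Hessian rather than recomputing it, and it justifies your unproved claim that $U_8,U_9$ are local coordinates via the nonzero determinant of the $7\times7$ Jacobian of equations $13,74,45,57,2,80,56$ in $U_1,\dots,U_7$; your rescaling and implicit-function control of the $O(\delta)$ error is a slightly more careful version of the paper's leading-order substitution.
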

\begin{proof}
The intrinsic Hessian of $y$ at the node computed in \cite[Section~5]{Szabo} gives the first displayed expansion. Differentiating the Borisov--Yeung equations at the node gives
\[
dU_7=-\frac{12}{29}\,du-\frac{36}{29}\,dv.
\]
The calculation is also checked by the exact script \texttt{check\_node\_tangent.py} in the ancillary file named in Section~7. Use the ordered list of $85$ bicanonical equations in the Borisov--Yeung ancillary file \texttt{MagmaCSMarch2018Main} (arXiv:1804.00737v6), numbering the equations from zero as in that list. In the chart $U_0=1$, substitute the exact node coordinates of \cite[Section~5]{Szabo} and work over $\Q[\theta]/(\theta^3-1116)$. Differentiate equations $13,74,45,57,2,80,56$ with respect to $U_1,\ldots,U_7$, keeping $U_8,U_9$ as local parameters. The determinant of this $7\times7$ coefficient matrix is
\[
-26504212164176875073986447937805414466800000\theta.
\]
It is nonzero, and exact solution of the seven linear tangent equations yields $dU_7=-(12\,du+36\,dv)/29$ as above. This gives $dL_0=(17\,du-7\,dv)/29=dz_1-dz_2/29$. Since $L_0$ vanishes and $L_1=3\theta/2$ at the node, this gives the second expansion. On $y=y_*+\delta$, the first equation has $z_1z_2=-\delta/\kappa+O(\delta^{3/2})$. Substituting $z_2=-\delta/(\kappa z_1)+O(\delta)$ into $h$ gives, to leading order, $2(z_1+\delta/(29\kappa z_1))/(3\theta)$. Its derivative vanishes at the two values $z_1=\pm\sqrt{\delta/(29\kappa)}+O(\delta)$, with nonzero second derivative. Substitution gives $h_\pm$ as stated.
\end{proof}

In the local two-sheeted component of the cover, the vanishing cycle is the inverse image of a short arc joining these branch values. To leading order, it has the parametrization
\[
z_1=\sqrt{\frac{\delta}{29\kappa}}e^{i\phi},\qquad
z_2=-\sqrt{\frac{29\delta}{\kappa}}e^{-i\phi},\qquad 0\leq\phi\leq2\pi;
\]
its image under $h$ traverses the interval between $h_-$ and $h_+$ twice. Identifying this cycle on $F$ requires transporting the two branch values and their associated sheets along $\gamma$.

In the Weierstrass coordinate of Section~3, the quotient of $E$ by its order-three action is the $y$-line. The orbifold points are $y=\infty$ and $y=\pm36i\sqrt3$, while the three nodal values project to the single real value $y_*$. Since $\ell\sigma=\ell$, the pair $(y,\ell)$ descends to a rational map
\[
X/\langle\sigma\rangle\dashrightarrow\Pone_y\times\Pone_{\ell}
\]
of generic degree $72$. On a smooth fiber where the pencil is basepoint-free, Riemann--Hurwitz gives total ramification degree $2(19)-2+2(72)=180$. On $F$ the degree-$72$ map factors as $F\xrightarrow{\pi}G\to\Pone$, and several ramification points can share a branch value. To compute the braid of branch values from $(y,\ell)$, one must track these coincident branch values near $O$, exclude contributions from the finite base locus of $\ell$, and account for the cyclic quotient singularities over the orbifold points. The resulting based homology matrix $T$ would give $m=1+\operatorname{tr}(S_*T)$ by Proposition~\ref{prop:single-trace}.

The equations in \cite{BY,Rito,AYreal} and the Albanese formula in \cite{Szabo} provide data for this calculation. The path $\gamma$ already defines $c\subset F$ by parallel transport, and Proposition~\ref{prop:local-branch-pair} determines its local branch pair. What remains is to continue that pair and its sheets along $\gamma$ to determine the isotopy class of $c$ on the marked fiber $F$. Consequently the intersection $[c]\cdot S_*[c]$ and the return maps along the handle loops, including the pushed-off real circle, remain uncomputed.

\section{Bicanonical curves and a product cover}

There is an invariant pencil that retains the three nodes while making the intersection with the Albanese fibers accessible. Put
\[
L_2=U_7-U_8,\qquad D_t=X\cap\{L_0+tL_2=0\},\quad t\in\mathbb P^1.
\]
Here the member at infinity is defined by $L_2=0$. Both sections have the same $\sigma$-character, so every member is $\sigma$-invariant. On the nodal line of \cite{Szabo} one has $(U_7,U_8,U_9)=(-s,-s,2s)$, so $L_0=L_2=0$. Hence every $D_t$ passes through the three critical points of $\alpha$.

\begin{proposition}\label{prop:symmetric-pencil}
The base scheme of this pencil has length $36$. It consists of $24$ reduced points and six points of length two, the latter being the fixed points of $\sigma$ outside $F$. A general member $D_t$ is smooth. It has genus $28$, and $\alpha|_{D_t}:D_t\to E$ has degree $72$ and total ramification degree $54$. For a general parameter, the six fixed points outside $F$ and the three nodes contribute nine simple ramification points. The remaining ramification has degree $45$ and is arranged in $15$ free $\sigma$-orbits, counted with multiplicity. Moreover, $D_t/\langle\sigma\rangle$ has genus $8$.
\end{proposition}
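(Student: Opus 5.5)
The plan is to use intersection theory on $X$ together with Riemann--Hurwitz, in three steps: the base scheme, then smoothness and genus of $D_t$, then the ramification of $\alpha|_{D_t}$ and the quotient.

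\emph{Base scheme.} The base scheme $\{L_0=L_2=0\}\cap X$ has length $H^2=(2K_X)^2=36$, provided the two divisors meet properly. Properness follows as in Proposition~\ref{prop:global-pencil}: a common curve component would either meet $F$ or be a full fiber, and the inequality $H\cdot F_t=72>36$ excludes a full fiber. To decompose the scheme, I would use the $\sigma$-invariance of both sections, so the base locus is a union of $\sigma$-orbits. At each of the six fixed points of type $\frac13(1,2)$ outside $F$, both $L_0$ and $L_2$ vanish; this is checked from the coordinates of these points in \cite{BY,CKY}. Since the local action is $\operatorname{diag}(\omega,\omega^2)$ and the sections are semi-invariant of a common character, neither can have an invariant nonzero linear term of the wrong weight. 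Hence the two local equations are $x+\dots$ and $y+\dots$, or both lie in the same weight space, which forces a length-two tangency. I expect to confirm length exactly two by an explicit local computation in the chart of the fixed point, using the Borisov--Yeung equations. That leaves $36-12=24$ reduced points, in eight free orbits, which include the three nodes by the observation before the statement. \emph{This local length computation, and the verification that the remaining points are reduced, are the main obstacle.} It requires exact algebra with the $85$ equations, in the style of the check in Proposition~\ref{prop:local-branch-pair}.

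\emph{Smoothness and genus.} By Bertini, a general member is smooth away from the base locus. At the reduced base points, and at the length-two points where one member vanishes to first order, it is smooth because the divisors there meet with independent differentials for general $t$. Adjunction gives $2g-2=D_t\cdot(D_t+K_X)=H\cdot 3K_X=54$, so $g=28$. The degree of $\alpha|_{D_t}$ is $D_t\cdot F=72$. Riemann--Hurwitz over the elliptic curve $E$ then gives total ramification $2g-2=54$.

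\emph{Ramification and the quotient.} At a node of a fiber, the smooth curve $D_t$ through it is tangent to one branch in general position only in special cases. Generically the restriction of $y-y_*=-\kappa z_1z_2$ to a line through the node is a nondegenerate quadric, giving a simple ramification point. At a fixed point outside $F$, the curve $D_t$ is $\sigma$-invariant with a fixed point. Since $\sigma_E$ acts on the base coordinate with an order-three multiplier, equivariance forces $\alpha|_{D_t}$ to be ramified there. The tangent weight of $D_t$ is $\omega$ or $\omega^2$, and $\alpha$ restricts to the vanishing weight of $d\alpha$, which I expect to yield simple ramification. The remaining $54-9=45$ is $\sigma$-invariant and avoids the fixed points, since the fixed points of $\sigma$ on $D_t$ are exactly those six points. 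Therefore it splits into $15$ free orbits. For the quotient genus, $\sigma$ acts on $D_t$ with $6$ fixed points, because the fixed points on $F$ are not in the base locus by Proposition~\ref{prop:three-labels}. Riemann--Hurwitz then gives $54=3(2g'-2)+12$, so $g'=8$.
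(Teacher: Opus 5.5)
Your overall route is the paper's: intersection numbers, adjunction, Riemann--Hurwitz over $E$, local analysis at the fixed points and nodes, and Riemann--Hurwitz for $D_t\to D_t/\langle\sigma\rangle$. However, the ramification step at the six fixed points has a genuine gap.

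\textbf{The fixed-point step.} Equivariance does \emph{not} by itself force $\alpha|_{D_t}$ to ramify at a fixed point $p$. Suppose $T_pD_t$ were the weight-$\omega^2$ line. That is the weight of $\sigma_E$ on the base, and this line is transverse to the fiber, so $\alpha|_{D_t}$ would be unramified at $p$. The nine points would then become three, and the residual $51$ is still divisible by $3$, so no count detects the error. You leave the tangent weight open (``$\omega$ or $\omega^2$''), but that is exactly what must be settled.

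The paper settles it by noting that the nonzero differential of $L_0$ at these points lies in the weight-two direction. Hence $T_pD_t$ is the weight-one fiber direction and the local degree is $\equiv 2\pmod 3$. This also follows from characters. $U_7,U_8,U_9$ are nonzero at $p_7$, where both weights are $\omega^2$, so they carry character $(\omega^2\cdot\omega^2)^2=\omega^2$. At a $\frac13(1,2)$ point the frame $(dx\wedge dy)^{2}$ is invariant, so the local function has character $\omega^2$ and its linear part is a multiple of the base coordinate. The same reasoning shows your first alternative (local equations $x+\cdots$ and $y+\cdots$) cannot occur: a common character forces proportional differentials.

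Even granting this, $e\equiv2\pmod3$ does not exclude $e=5,8,\dots$, so ``I expect simple ramification'' must be replaced by a check. The paper computes the quadratic coefficient of Szab\'o's coordinate $x=P_9/Q_9$ along the tangent of $D_0$ modulo the ideal of the six points, finds a unit, and uses openness in $t$. Without both facts, neither the nine simple points nor the residual degree $45$ is established.

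\textbf{Properness and the base scheme.} Properness ``as in Proposition~\ref{prop:global-pencil}'' does not transfer. That proof excluded a horizontal base curve using that $L_0,L_1$ have no common zero on $F$, and nothing you cite gives this for $L_0,L_2$. At base points with $(U_7:U_8:U_9)=(1:1:-2)$ one has $L_1=-3U_7\neq0$, so the known base-point-freeness says nothing there.

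The paper instead computes the whole base scheme. In the chart $U_7=1$ the equations give $U_8=1$, $U_9=-2$, and a $24$-dimensional quotient algebra with squarefree characteristic polynomial of $U_0$. The rest lies on $U_7=U_8=U_9=0$, misses $U_3=0$, and has length $12$ in $U_3=1$. This yields finiteness, the split $24+12$, and reducedness simultaneously. Local checks at the six fixed points alone cannot exclude further nonreduced base points.

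\textbf{The nodes.} At a node the tangent of $D_t$ is not a generic line. To get a tangent that is nonaxial for general $t$, you need either that $dL_0$ and $dL_2$ are independent there (reducedness of the nodes as base points), or the explicit formula $d(L_0+tL_2)=dz_1-(1+t)dz_2/29$.
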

\begin{proof}
The intersection number of two bicanonical divisors is $(2K_X)^2=36$. The exact verification scripts and their input equations are in the ancillary archive \href{https://github.com/akhmedov-coder/C-S-Fibration-Monodromy-verification.zip/blob/9d9b388bd2cdd56a618da3eb4081414b02e679e1/C-S-Fibration-Monodromy-verification.zip}{\texttt{C-S-Fibration-Monodromy-verification.zip}}. They work over $\mathbb Q$ with the $85$ bicanonical equations of \cite{BY} in the coordinate order used by \cite{Szabo}. In the chart $U_7=1$, the two pencil equations give $U_8=1$ and $U_9=-2$. The script \texttt{gb\_weight2\_Q.py} computes that the quotient algebra has dimension $24$ and that the characteristic polynomial of multiplication by $U_0$ is squarefree of degree $24$, proving that these are reduced points. The remaining base scheme has $U_7=U_8=U_9=0$. A homogeneous Gr\"obner calculation shows that its intersection with $U_3=0$ is only the affine cone vertex; it therefore has no projective points there. In the chart $U_3=1$ its length is $12$. The six fixed points of $\sigma$ outside $F$ belong to this scheme. The tangent calculation in the ancillary verifier shows that the differentials of $L_0$ and $L_2$ are proportional there, so each local intersection has length at least two; it also checks $dL_0\ne0$ at all six points. They exhaust the length-$12$ scheme and a general pencil member is smooth there. It is smooth at the reduced base points as well, and Bertini's theorem applies elsewhere.

Adjunction and $K_X^2=9$ give $2g(D_t)-2=(K_X+2K_X)\cdot2K_X=54$, so $g(D_t)=28$. Also $D_t\cdot F=2K_X\cdot F=72$, and Riemann--Hurwitz for a map to $E$ gives ramification degree $54$. At a fixed point outside $F$, the nonzero differential of $L_0$ is in the weight-two direction, so the tangent of $D_0$ has weight one. Since $\sigma_E$ has weight two, equivariance forces the local degree of $\alpha|_{D_t}$ to be congruent to two modulo three. In the local coordinate $x=P_9/Q_9$ of \cite{Szabo}, $Q_9$ does not vanish at any of these six points. For $t=0$ the verifier computes the quadratic coefficient along the tangent to $D_0$ modulo the ideal of the six points and checks that it is a unit. Thus each point contributes simple ramification for general $t$. At a node, Section~6 gives $y-y_*=-\kappa z_1z_2+O(3)$ and $d(L_0+tL_2)=dz_1-(1+t)dz_2/29$. For general $t\ne-1$ the tangent is nonaxial, so the restriction of $\alpha$ again has local degree two. These are nine simple ramification points. The remaining ramification points occur in free $\sigma$-orbits of length three, giving the stated residual degree, with multiplicity. Finally $\sigma|_{D_t}$ has precisely the six fixed points outside $F$ for general $t$, and Riemann--Hurwitz for $D_t\to D_t/\langle\sigma\rangle$ gives $54=3(2g-2)+12$, whence $g=8$.
\end{proof}

The pencil in Proposition~\ref{prop:symmetric-pencil} has nonreduced base points. For a finite map to a product we use a general bicanonical pencil instead.

\begin{theorem}\label{thm:product-cover}
Let $H_0,H_1\in|2K_X|$ be general divisors meeting transversely at $36$ points, and let $\pi:Y\to X$ be the blowup of these points. The resolved pencil $h:Y\to\mathbb P^1$ and the Albanese map give a finite map
\[
f=(\alpha\pi,h):Y\longrightarrow E\times\mathbb P^1
\]
of degree $72$. If $A=E\times\{\mathrm{pt}\}$ and $B=\{\mathrm{pt}\}\times\mathbb P^1$, the ramification divisor $R$ and its pushforward branch cycle satisfy
\[
R\equiv5\pi^*K_X-\sum_{i=1}^{36}E_i,
\qquad f_*R\equiv180A+54B.
\]
Here $e(Y)=39$ and $\operatorname{sign}(Y)=-35$.
\end{theorem}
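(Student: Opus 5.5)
We compute the invariants of $Y$ directly, establish finiteness of $f$, and then obtain $R$ from the canonical bundle formula and $f_*R$ by intersecting $R$ with the pullbacks of $A$ and $B$.

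\emph{Invariants of $Y$.} Blowing up $36$ points gives
\[
e(Y)=3+36=39,\qquad \operatorname{sign}(Y)=1-36=-35.
\]

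\emph{Finiteness.} Since $H_0,H_1$ are general in the base-point-free, ample system $|2K_X|$, they meet transversely in $H^2=36$ points, and $h$ is a morphism on $Y$ whose fibers are the strict transforms $\tilde D_t$ of the pencil members, each in the class $2\pi^*K_X-\sum E_i$. The map $f$ fails to be finite only if some curve is contracted. Such a curve must lie in a fiber of $h$ and in a fiber of $\alpha\pi$. The exceptional curves $E_i$ are sections of $h$, so they are not contracted. A contracted curve would therefore be a common component of some pencil member $D_t$ and some Albanese fiber $F_s$. Fibers of $\alpha$ are irreducible (\cite{CKY} for the singular ones), so this would force $F_s\subset D_t$. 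The argument of Proposition~\ref{prop:global-pencil} excludes this: $H=F_s+R'$ with $R'$ effective contradicts $36=H^2\ge H\cdot F_s=72$.

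\emph{Degree.} The degree equals
\[
\deg f=(f^*A)\cdot(f^*B)=(\pi^*F)\cdot\Bigl(2\pi^*K_X-\sum E_i\Bigr)=2K_X\cdot F=72.
\]

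\emph{Ramification divisor.} Since $f$ is finite between smooth surfaces, it is flat, and $K_Y=f^*K_{E\times\Pone}+R$. Here $K_{E\times\Pone}=-2A$, and $f^*A=h^*(\mathrm{pt})=2\pi^*K_X-\sum E_i$. Combining this with $K_Y=\pi^*K_X+\sum E_i$ gives
\[
R=\pi^*K_X+\sum E_i+4\pi^*K_X-2\sum E_i=5\pi^*K_X-\sum E_i.
\]

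\emph{Branch cycle.} The group $H^2(E\times\Pone)$ is generated by $A,B$ with $A\cdot B=1$ and $A^2=B^2=0$. By the projection formula, $f_*R=(R\cdot f^*B)\,A+(R\cdot f^*A)\,B$, where $f^*B=\pi^*F$. The coefficients are
\[
R\cdot\pi^*F=5K_X\cdot F=180,\qquad R\cdot f^*A=10K_X^2-36=54.
\]
These agree with the two Riemann--Hurwitz counts: $180$ is the ramification of $h$ on a smooth Albanese fiber (Section~6), and $54$ is the ramification of $\alpha$ on a pencil member of genus $28$, as in Proposition~\ref{prop:symmetric-pencil}.

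The main obstacle is the finiteness argument. One must check that the exceptional curves and the singular Albanese fibers cause no contraction, and that genericity of the pencil gives transverse base points. Everything else is intersection arithmetic.
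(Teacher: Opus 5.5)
Your proposal is correct and follows the paper's route: finiteness by checking that no curve contracted by $\alpha\pi$ is contracted by $h$, $\deg f=D\cdot F=72$, $R=K_Y-f^*K_{E\times\Pone}$, the projection formula for $f_*R$, and the blowup count for $e(Y)$ and $\operatorname{sign}(Y)$. The one small difference is in finiteness. The paper computes $h$-degrees directly: $72-k\ge 36$ for a smooth fiber through $k$ base points, $72$ for a singular fiber (which misses the base points), and $1$ for each $E_i$. You instead reuse the ampleness inequality $36=H^2\ge H\cdot F_s=72$ from Proposition~\ref{prop:global-pencil}, which does not need the base points to avoid the singular fibers.
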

\begin{proof}
The bicanonical system is very ample \cite{BY}, so a general pair has $(2K_X)^2=36$ distinct transverse base points, all away from the three singular fibers. Put $D=\pi^*(2K_X)-\sum E_i$, the class of a fiber of $h$, and write $F$ also for the pullback of a smooth Albanese fiber. Then $f^*A=D$, $f^*B=F$, and $D\cdot F=72$. Every Albanese fiber of $X$ is irreducible \cite{CKY}. The strict transform of a smooth fiber containing $k$ base points has degree $72-k\geq36$ under $h$, since these base points are transverse and have multiplicity one. A singular fiber contains no base points and has degree $72$. Each exceptional curve maps with degree one under $h$. These are all curves contracted by $\alpha\pi$, and none is contracted by $h$. The proper map $f$ is therefore finite, with degree $D\cdot F=72$.

Since $K_{E\times\mathbb P^1}=-2A$ and $K_Y=\pi^*K_X+\sum E_i$, the ramification formula gives $R=K_Y-f^*K_{E\times\mathbb P^1}=5\pi^*K_X-\sum E_i$. By the projection formula,
\[
(f_*R)\cdot B=R\cdot F=180,\qquad
(f_*R)\cdot A=R\cdot D=5(2K_X^2)-36=54.
\]
These intersections give the asserted numerical class of the branch cycle. Finally blowing up $36$ points increases the Euler characteristic by $36$ and decreases the signature by $36$.
\end{proof}

The branch cycle in Theorem~\ref{thm:product-cover} is taken with its ramification multiplicities; its numerical class makes no assertion that its support is smooth or that all branching is simple. There is also a homological restriction on any global monodromy obtained by continuation.

\begin{theorem}\label{thm:vanishing-span}
The homology classes of all monodromy translates of the three nodal vanishing cycles span $H_1(F;\Q)$.
\end{theorem}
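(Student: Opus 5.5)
The plan is to identify the span with the orthogonal complement of the invariant cohomology, and then to use the fact that the Albanese fibration has no nonzero global monodromy invariants. Let $V\subset H_1(F;\Q)$ be the span of all classes $\rho(g)_*S_*^i[c]$, where $g$ runs over $\pi_1(E\setminus\{b,\sigma_Eb,\sigma_E^2b\})$. Equivalently, $V$ is the span of the classes of all vanishing cycles obtained from all connecting paths.

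\textbf{Step 1.} We show that $V^\perp$ is the space of monodromy invariants. The space $V$ is invariant under the monodromy group $\Gamma=\rho(K)$. By the Picard--Lefschetz formula $T_v(x)=x+(v\cdot x)v$, a class $x$ is fixed by every meridian twist exactly when $x\cdot v=0$ for all vanishing classes $v$. Monodromy about the handle generators $A,B$ need not fix $x$, so the argument must be arranged carefully. We have $V^\perp=\{x: x\cdot v=0\ \forall v\in V\}$. Since $V$ is $\Gamma$-stable and contains every $\Gamma$-translate of $[c]$, the space $V^\perp$ consists of the classes fixed by all meridian twists, namely the local invariants at every node. Because the intersection form is nondegenerate, $V=H_1(F;\Q)$ if and only if $V^\perp=0$.

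\textbf{Step 2.} We show that $V^\perp$ is $\Gamma$-stable and carries a unitary structure. Since $V^\perp$ is $\Gamma$-stable, the local system $\mathbb V^\perp$ spanned by it extends across the three critical values with trivial local monodromy. It therefore defines a sub-local system of $R^1\alpha_*\Q$ over all of $E$. By Deligne's semisimplicity, or by the theorem of the fixed part applied to the compact base $E$, this sub-local system underlies a sub-variation of Hodge structure. It is thus the cohomology of a family of abelian subvarieties of the Jacobians, and it is unitary or flat over $E$.

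\textbf{Step 3.} The key case is to show that $V^\perp$ is zero. One route uses the fixed part. The invariant subspace $H^0(E,R^1\alpha_*\Q)$ has dimension $2q(X)-2g(E)=0$, because $q(X)=1$. A general unitary local system over $E$ need not have invariants, so we argue instead with the orthogonal decomposition. We can write $R^1\alpha_*\Q=\mathbb V\oplus\mathbb V^\perp$, where $\mathbb V^\perp$ has no singular points. The Leray spectral sequence together with the equality $e(X)=3$ then forces the contribution of $\mathbb V^\perp$ to vanish. More precisely, a nonzero sub-variation of Hodge structure without singular points gives a piece of $R^1\alpha_*$ whose Hodge bundle $\alpha_*\omega$ has degree-zero summands. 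However, $\deg\alpha_*\omega_{X/E}=\chi(\mathcal O_X)=1$. By Fujita's decomposition, any flat unitary summand would give extra irregularity after a finite étale base change of $E$. For the Cartwright--Steger surface, étale covers of $X$ obtained by pullback from isogenies of $E$ remain fake-projective-type ball quotients, for which $b_1$ of congruence covers is controlled. I expect this step to be the main obstacle.

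\textbf{Alternative for Step 3.} Another approach is to use the arithmetic of the ball quotient directly. The flat unitary summand corresponds to a quotient of $\pi_1(X)$ with finite image on an integral lattice, because it is a unitary integral local system. After a finite étale cover $X'\to X$ the summand becomes trivial, and it then contributes $\dim V^\perp/2$ to $q(X')-q(E')$. Such a flat part yields a nonconstant map from $X'$ to an abelian variety that is constant along fibers, a contradiction. Therefore $V^\perp=0$ and $V=H_1(F;\Q)$.
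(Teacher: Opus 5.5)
\textbf{There is a genuine gap: Step 3 carries the whole content of the theorem and is not proved.}

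Your reduction in Steps 1--2 is sound in outline. $V^\perp$ is exactly the set of classes fixed by every conjugate of a meridian twist. It is a $\Gamma$-stable sub-local system with trivial local monodromy, so its monodromy factors through $\pi_1(E)\cong\mathbb Z^2$. To call it a sub-variation of Hodge structure you should add one observation: $V^\perp$ is the invariant space of the normal subgroup generated by the meridians, so it is a sum of isotypic components in Deligne's decomposition. Griffiths extension and Liouville for the lifted period map $\mathbb C\to\mathfrak H_g$ then give finite monodromy.

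None of the general facts you invoke in Step 3 excludes a finite-monodromy summand. The equality $e(X)=3$ only counts nodes and holds whatever $V^\perp$ is. The equality $\deg\alpha_*\omega_{X/E}=1$ is compatible with a nonzero flat summand in Fujita's decomposition. The fact $q(X)=1$ only rules out the trivial character.

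Your closing ``contradiction'' is not one. Take $X'=X\times_E E'$ for an isogeny $E'\to E$ that trivializes the monodromy on $V^\perp$. Then $q(X')>q(E')=1$, and the extra part of the Albanese of $X'$ comes from classes on the fibers. So the resulting map is \emph{not} constant along fibers. Nothing is contradictory about $q(X')>1$, and you cite no principle that controls $b_1$ of these covers.

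What is missing is a surface-specific computation showing that no nontrivial character of $\pi_1(E)$ occurs. Concretely, you need $H_1(X;\mathbb C_\chi)=0$ for every nontrivial $\chi$ pulled back from $\pi_1(E)$. Equivalently, $b_1(X\times_E E')=2$ for every isogeny $E'\to E$.

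The paper supplies exactly this from the Cartwright--Koziarz--Yeung presentation:
\begin{itemize}
\item The Fox matrix, with the Albanese character substituted, has rank $2$ at every $(s,t)\in(\mathbb C^*)^2\setminus\{(1,1)\}$, so $H_1(X;\mathbb C_\chi)=0$ for nontrivial $\chi$.
\item The five-term sequence for $1\to K\to\pi_1(X)\to\mathbb Z^2\to1$ then kills all $\chi$-coinvariants of $H_1(K;\mathbb C)\cong H_1(F;\mathbb C)/W$.
\item The trivial character is handled by $q(X)=1$ and surjectivity of $H_2(X)\to H_2(E)$.
\item Since $H_1(K;\mathbb C)$ is finite dimensional with a commuting $\mathbb Z^2$-action, vanishing of all character coinvariants gives $H_1(K;\mathbb Q)=0$.
\end{itemize}

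Your Hodge-theoretic reduction would let you restrict that check to torsion characters. Some such explicit input is still indispensable, and without it the proposal does not prove the statement.
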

\begin{proof}
Let $E^\circ$ be the complement of the three critical values, and let $W\subset H_1(F;\Q)$ be the invariant span of the vanishing classes under the monodromy of $E^\circ$. Put $K=\ker(\pi_1(X)\to\pi_1(E))$. Van Kampen for the nodal fibration identifies
\[
H_1(K;\Q)\cong H_1(F;\Q)/W.
\]
This is finite dimensional, and the conjugation action of $\pi_1(E)\cong\Z^2$ on it is well defined.

We use the presentation of $\pi_1(X)$ computed by Cartwright, Koziarz, and Yeung \cite{CKY}. In their generators $a_1,a_2,a_3$, the Albanese character sends them to $(1,3)$, $(-2,1)$, and $(-1,-1)$, respectively. Let $\mathcal J(s,t)$ be the Fox matrix of their thirteen relators, with these images substituted. Direct calculation gives
\[
\operatorname{rank}\mathcal J(s,t)=
\begin{cases}
1,&(s,t)=(1,1),\\
2,&(s,t)\in(\mathbb C^*)^2\setminus\{(1,1)\}.
\end{cases}
\]
Here is a short exact check of the second assertion. Number the relators as in the presentation file accompanying \cite{CKY}. Take the minors in columns $1,2$ and row pairs $(1,2),(1,3),(2,3),(1,4)$. After dividing by $st-1$ and clearing Laurent monomials, their polynomial factors have Gr\"obner basis $(s^4,t)$ in $\Q[s,t]$. Thus the rank is two when $st\ne1$. On $st=1$, the numerators of the minors in columns $1,3$ and row pairs $(1,2),(1,3)$, together with the minor in columns $2,3$ and rows $(1,2)$, have greatest common divisor $s-1$. The rank at $(1,1)$ follows by direct substitution. A script performing these calculations is included in the ancillary verification file.

For a nontrivial character $\chi$ of $\Z^2$, the degree-one boundary map of the presentation complex has rank one, so the Fox rank gives $H_1(X;\mathbb C_\chi)=0$. The homology five-term sequence for $1\to K\to\pi_1(X)\to\Z^2\to1$, with these twisted coefficients, then shows that the $\chi$-coinvariants of $H_1(K;\mathbb C)$ vanish. For the trivial character, the same sequence, together with the surjection $H_2(X;\Q)\to H_2(\pi_1(X);\Q)$, gives
\[
H_2(X;\Q)\longrightarrow H_2(E;\Q)\longrightarrow
H_1(K;\Q)_{\Z^2}\longrightarrow H_1(X;\Q)\longrightarrow H_1(E;\Q).
\]
The first arrow is onto: a bicanonical curve maps to $E$ with positive degree. The last arrow is an isomorphism. Hence the ordinary coinvariants vanish as well. If $H_1(K;\mathbb C)$ were nonzero, its finite-dimensional commuting $\Z^2$-action would have a one-dimensional character quotient. This contradicts the vanishing just proved for every character. Therefore $H_1(K;\Q)=0$, as required.
\end{proof}

\vspace{-0.5\baselineskip}
\begin{proposition}\label{prop:albanese-kernel-normal-generation}
Let $F$ be a smooth fiber of $\alpha:X\to E$, and let
$K=\ker(\alpha_*:\pi_1(X)\to\pi_1(E))$. The inclusion of $F$ induces
a surjection $\pi_1(F)\to K$. In terms of the generators $a_1,a_2,a_3$
in the presentation of Cartwright, Koziarz, and Yeung \cite{CKY}, $K$ is the
normal closure in $\pi_1(X)$ of
\[
[a_1,a_2],\qquad [a_1,a_3],\qquad [a_2,a_3],
\qquad a_1^{-3}a_2^{\,2}a_3^{-7}.
\]
\end{proposition}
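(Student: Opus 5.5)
The plan has two parts: first the surjection $\pi_1(F)\to K$, then the identification of $K$ as a normal closure. For the surjection, I would use the homotopy exact sequence of the fibration over the complement of the critical values, $\pi_1(F)\to\pi_1(X^\circ)\to\pi_1(E^\circ)\to1$, where $X^\circ=\alpha^{-1}(E^\circ)$. Next I would apply van Kampen to glue back the three singular fibers, exactly as in the proof of Theorem~\ref{thm:vanishing-span}. Each nodal neighborhood is a $2$-handle attached along a vanishing cycle. So $\pi_1(X)$ is obtained from $\pi_1(X^\circ)$ by killing the lifts of the three meridians; these lifts are conjugates of the vanishing cycles, which already lie in the image of $\pi_1(F)$. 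Since $\alpha$ has connected fibers and a section-free but surjective $\pi_1$-map, we get $\pi_1(X)/\operatorname{im}\pi_1(F)\cong\pi_1(E)$. The image of $\pi_1(F)$ is normal, being the image of the fiber group in the sequence. Hence $\operatorname{im}\pi_1(F)=K$.

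For the normal generators, let $N$ be the normal closure of the four listed words. The first task is $N\subseteq K$. Using the character values $(1,3),(-2,1),(-1,-1)$ already recorded in the proof of Theorem~\ref{thm:vanishing-span}, the commutators map to zero in the abelian group $\Z^2$. For the fourth word,
\[
-3(1,3)+2(-2,1)-7(-1,-1)=(-3-4+7,\,-9+2+7)=(0,0),
\]
so it also lies in $K$.

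For the reverse inclusion, note that the quotient $\pi_1(X)/N$ is abelian, because $a_1,a_2,a_3$ generate and their commutators are killed. It is therefore the quotient of $\Z^3$ by the images of the thirteen CKY relators together with $(-3,2,-7)$. I would abelianize the CKY relators and show that these vectors, with $(-3,2,-7)$, span exactly the kernel of the map $\Z^3\to\Z^2$ given by the matrix with columns $(1,3),(-2,1),(-1,-1)$. That kernel is rank one and is generated by the primitive vector $(-3,2,-7)$. This vector is primitive since $\gcd(3,2,7)=1$, and a direct check shows it generates the kernel: it is the cross product of the two rows $(1,-2,-1)$ and $(3,1,-1)$, which equals $(3,-2,7)$, with entries having gcd $1$.

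It also has to be checked that the map $\Z^3\to\Z^2$ is surjective, so that the character is exactly $\alpha_*$ on $H_1$. The $2\times2$ minors of the matrix are $7$, $2$ and $-3$ (with signs), and their gcd is $1$, so the map is onto. Hence $\pi_1(X)/N\cong\Z^3/\langle(-3,2,-7)\rangle\cong\Z^2$, provided the abelianized relators lie in that kernel. They must, since $\alpha_*$ is defined on $\pi_1(X)$. The induced map $\pi_1(X)/N\to\pi_1(E)$ is then a surjection $\Z^2\to\Z^2$, hence an isomorphism, and $K=N$.

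The main obstacle is purely bookkeeping: one must confirm that the Albanese character values quoted from \cite{CKY} are the actual $\alpha_*$ images in a basis of $\pi_1(E)$, not merely up to a finite-index sublattice. Surjectivity of $\alpha_*$ (connected fibers) together with the unimodular-minor check settles this. A lesser point is that $H_1(X;\Z)$ may carry torsion. This does not matter, since only the quotient $\pi_1(X)/N$ enters, and its identification with $\Z^3/\langle v\rangle$ uses only the relation $[a_i,a_j]=1$ plus the one extra word; the abelianized CKY relators are then automatically multiples of $v$.
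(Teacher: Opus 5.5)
Your normal-generation argument is essentially the paper's and is fine: abelianize $\pi_1(X)/N$, note that the abelianized CKY relators lie in the kernel $\langle(-3,2,-7)\rangle$ of the character, and conclude $\pi_1(X)/N\cong\mathbb Z^2$. The gap is in the surjection $\pi_1(F)\to K$.

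You say that filling in the singular fibers kills lifts of the three meridians, and that ``these lifts are conjugates of the vanishing cycles, which already lie in the image of $\pi_1(F)$.'' This is false. A lift of a meridian $\mu_i$ maps to the nontrivial element $\mu_i$ of the free group $\pi_1(E^\circ)$. So it cannot lie in $\operatorname{im}\pi_1(F)=\ker(\pi_1(X^\circ)\to\pi_1(E^\circ))$. Read literally, your description would give $\pi_1(X)/\operatorname{im}\pi_1(F)\cong\pi_1(E^\circ)$, a free group of rank four.

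The fallback justification, ``connected fibers and surjective $\alpha_*$,'' is not enough either. A fibration with a multiple fiber has both properties, yet there $\pi_1(X)/\operatorname{im}\pi_1(F)$ is the orbifold fundamental group of the base. That group can be strictly larger than $\pi_1$ of the base.

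The missing idea is what gluing $\alpha^{-1}(D_i)$ actually kills. It kills two separate things:
\begin{enumerate}
\item the transported vanishing cycle, which does lie in $\operatorname{im}\pi_1(F)$;
\item one particular lift of $\mu_i$, namely the boundary of a local holomorphic section through a smooth point of the reduced nodal fiber.
\end{enumerate}
The second item is where the absence of multiple fibers enters, not ``section-freeness.'' With this correction,
\[
\pi_1(X)/\operatorname{im}\pi_1(F)\cong\pi_1(E^\circ)/\langle\langle\mu_1,\mu_2,\mu_3\rangle\rangle=\pi_1(E),
\]
and your conclusion $\operatorname{im}\pi_1(F)=K$ follows.

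The repaired route differs from the paper's. The paper pulls back to the universal cover $\widetilde E$, identifies $\pi_1(\widetilde X)=K$, and applies van Kampen over an exhaustion of the contractible $\widetilde E$ by disks. That route also yields $K\cong\pi_1(F)/\langle\langle\text{all transported vanishing cycles}\rangle\rangle$, so it describes the kernel of $\pi_1(F)\to K$ and not just its image. Yours gives only the image, but needs only the standard exact sequence of the bundle over the punctured base.
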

\begin{proof}
The presentation of Cartwright, Koziarz, and Yeung gives generators
$a_1,a_2,a_3$ of $\pi_1(X)$ with images in
$\pi_1(E)\cong\mathbb Z^2$ given by
\[
v_1=(1,3),\qquad v_2=(-2,1),\qquad v_3=(-1,-1).
\]
Their pairwise determinants are $7,2,3$. Hence they generate
$\mathbb Z^2$, and $\alpha_*$ is onto. The kernel of the map
$\mathbb Z^3\to\mathbb Z^2$ with columns $v_1,v_2,v_3$ is generated
by $(-3,2,-7)$: this vector is primitive, and
$-3v_1+2v_2-7v_3=0$.

Let $\widetilde E\to E$ be the universal cover and
$\widetilde X=X\times_E\widetilde E$. Then
$\pi_1(\widetilde X)=K$. Over the complement of the critical values
the map $\widetilde X\to\widetilde E$ is a surface bundle. Around each
node, filling the puncture kills a peripheral lift, and the Lefschetz
thimble kills the transported vanishing cycle. Since
$\widetilde E$ is contractible, van Kampen applied over an exhaustion
by disks gives
\[
K\cong\pi_1(F)\big/
\big\langle\!\big\langle
\text{all transported nodal vanishing cycles}
\big\rangle\!\big\rangle.
\]
Thus $\pi_1(F)\to K$ is onto. Paths to the lifts of the critical values
include paths winding around the handle loops of $E$.

Let $N$ be the normal closure of the four elements in the statement.
They lie in $K$, so $N\subseteq K$. The quotient $\pi_1(X)/N$ is
abelian, hence a quotient of
$\mathbb Z^3/\langle(-3,2,-7)\rangle$. The map from this last group
to $\mathbb Z^2$ defined by $v_1,v_2,v_3$ is an isomorphism.
Therefore the map $\pi_1(X)/N\to\mathbb Z^2$ induced by $\alpha_*$
is an isomorphism. Its kernel is $K/N$, and $N=K$.
\end{proof}

The local pair in Proposition~\ref{prop:local-branch-pair} and the invariant curves of Proposition~\ref{prop:symmetric-pencil} offer two ways to approach the global braid calculation. Theorem~\ref{thm:vanishing-span} restricts its possible outcome, but does not identify the individual vanishing cycle or a handle return map. Determining the value of $m=[c]\cdot S_*[c]$ requires continuing the two sheets and their connecting vanishing arc to the marked fiber. We leave that global computation for further work.

\section*{Acknowledgments}
The authors used an LLM for grammatical editing and assistance with some computations.


\begin{thebibliography}{99}
\bibitem{AkhSmall} A. Akhmedov, \emph{Small exotic $4$-manifolds}, Algebr. Geom. Topol. \textbf{8} (2008), 1781--1794.
\bibitem{APSmall} A. Akhmedov and B.~D.~Park, \emph{Exotic smooth structures on small $4$-manifolds}, Invent. Math. \textbf{173} (2008), 209--223.
\bibitem{APOdd} A. Akhmedov and B.~D.~Park, \emph{Exotic smooth structures on small $4$-manifolds with odd signatures}, Invent. Math. \textbf{181} (2010), 577--603.
\bibitem{AYquotient} A. Akhmedov and S.-K. Yeung, \emph{Positive spheres and circle surgery in the real Cartwright--Steger quotient}, preprint, September 2026.
\bibitem{ASY} A. Akhmedov, S. Sakall\i{} and S.-K. Yeung, \emph{Complex ball quotients and new symplectic $4$-manifolds with nonnegative signatures}, Taiwanese J. Math. \textbf{28} (2024), 29--53.
\bibitem{AYreal} A. Akhmedov and S.-K. Yeung, \emph{The real Albanese map of the Cartwright--Steger surface}, preprint, September 2026.
\bibitem{BY} L. A. Borisov and S.-K. Yeung, \emph{Explicit equations of the Cartwright--Steger surface}, \`Epijournal de G\'eom\'etrie Alg\'ebrique \textbf{4} (2020), Article 10; ancillary file \url{https://arxiv.org/src/1804.00737v6/anc/MagmaCSMarch2018Main}.
\bibitem{CKY} D. I. Cartwright, V. Koziarz and S.-K. Yeung, \emph{On the Cartwright--Steger surface}, J. Algebraic Geom. \textbf{26} (2017), 655--689; long version, \url{https://www.math.u-bordeaux.fr/~vkoziarz/cs-surface_long.pdf}; presentation file, \url{https://www.maths.usyd.edu.au/u/donaldc/cs-surface/cs-surface-presentations_magma.txt}.
\bibitem{KY} V. Koziarz and S.-K. Yeung, \emph{Stability of the Albanese fibration on the Cartwright--Steger surface}, Taiwanese J. Math. \textbf{25} (2021), 251--256.
\bibitem{Nielsen} J. Nielsen, \emph{Die Struktur periodischer Transformationen von Fl\"achen}, Mat.-Fys. Medd. Danske Vid. Selsk. \textbf{15} (1937), no.~1, 1--77.
\bibitem{Rito} C. Rito, \emph{Surfaces with canonical map of maximum degree}, J. Algebraic Geom. \textbf{31} (2022), 127--135; ancillary computations, arXiv:1903.03017.
\bibitem{SS} A. I. Stipsicz and Z. Szab\'o, \emph{The real quotient of the Cartwright--Steger surface}, arXiv:2609.09207 (2026).
\bibitem{SY} A. I. Stipsicz and K.-H. Yun, \emph{On the minimal number of singular fibers in Lefschetz fibrations over the torus}, Proc. Amer. Math. Soc. \textbf{145} (2017), 3607--3616.
\bibitem{Szabo} Z. Szab\'o, \emph{A polynomial formula for the Albanese map of the Cartwright--Steger surface}, arXiv:2609.23124 (2026), with ancillary files.
\end{thebibliography}
\end{document}